\definecolor{linkblue}{rgb}{0,0.2,0.6}
\def\docpdftitle{Some singular curves in Mukai's model of $\overline{M}_7$}
\newtheorem{theorem}{Theorem}[section]
\newtheorem{proposition}[theorem]{Proposition}
\theoremstyle{definition}
\newtheorem{definition}[theorem]{Definition}
\newtheorem{question}[theorem]{Question}
\numberwithin{equation}{section}
\newcommand{\codelink}[2]{\href{#1}{\sffamily{\texttt{Code {#2}}}}}
\newcommand{\Pro}{\mathbb{P}}
\newcommand{\Span}{\operatorname{Span}}
\newcommand{\Hilb}{\operatorname{Hilb}}
\newcommand{\Sym}{\operatorname{Sym}}
\newcommand{\SO}{\operatorname{SO}}
\newcommand{\OG}{\operatorname{OG}(5,10)}
\newcommand{\so}{\mathfrak{so}}
\newcommand{\Spin}{\operatorname{Spin}}
\newcommand{\Gr}{\operatorname{Gr}}
\newcommand{\even}{\operatorname{even}}
\newcommand{\odd}{\operatorname{odd}}
\newcommand{\git}{/\!\!/}
\newcommand{\M}{\overline{M}}
\newcommand{\mywedge}{\mathsf{\Lambda}}
\newcommand{\CCusp}{C_{\operatorname{cusp}}}
\newcommand{\CRib}{C_{\operatorname{rib}}}
\newcommand{\CNod}[1]{C_{\operatorname{nod},#1}}
\newcommand{\MCusp}{M_{\operatorname{cusp}}}
\newcommand{\MRib}{M_{\operatorname{rib}}}
\newcommand{\MNod}[1]{M_{\operatorname{nod},#1}}
\newcommand{\PCusp}{P_{\operatorname{cusp}}}
\newcommand{\PRib}{P_{\operatorname{rib}}}
\newcommand{\PNod}[1]{P_{\operatorname{nod},#1}}
\begin{document}

\title{Some singular curves in Mukai's model of $\M_7$}


\author{David Swinarski}
\address{Department of Mathematics\\ 
Fordham University\\ 
441 E Fordham Rd\\ 
Bronx, NY 10458}
\email{dswinarski@fordham.edu}


\date{}

\begin{abstract}
Mukai showed that the GIT quotient $\Gr(7,16) \git \Spin(10)$
is a birational model of the moduli space of Deligne-Mumford stable
genus 7 curves $\M_7$. The key observation is that a general smooth genus 7 curve can be realized
as the intersection of the orthogonal Grassmannian $\OG$ in
$\mathbb{P}^{15}$ with a six-dimensional projective linear
subspace. What objects appear on the boundary of Mukai's model? As a first step in this
study, computer calculations in \texttt{Macaulay2}, \texttt{Magma}, and \texttt{Sage}
are used to find and analyze linear spaces yielding three examples of singular curves: a 7-cuspidal curve, the
balanced ribbon of genus 7, and a family of genus 7 reducible nodal
curves. $\Spin(10)$-semistability is established by constructing and
evaluating an invariant polynomial.
\end{abstract}

\maketitle

\section{Introduction}

In 1995 Mukai showed that the GIT quotient $\Gr(7,16) \git \Spin(10)$
is a birational model of the moduli space of Deligne-Mumford stable
genus 7 curves $\M_7$. We briefly recall this correspondence.

For a general curve of genus $g\geq 3$, the canonical ideal $I$ is
generated by $\binom{g-2}{2}$ quadrics. Thus, when $g=7$, 10 quadrics
in $\Pro^6$ are required.

Mukai showed that for a smooth genus 7 curve with no $g_2^1$, $g_3^1$,
or $g_4^1$, the multiplication map  $\Sym^2(I_2) \rightarrow I_4$ has
a one-dimensional kernel. Let $Q$ be a generator of the kernel. Then
$(I_2,Q)$ is a 10-dimensional quadratic vector space.

Let $f_0,\ldots,f_9 \in k[x_0,\ldots,x_6]$ generate $I_2$. For each $p
\in C$, the row space of the Jacobian matrix at $p$
\[
\left[ \frac{\partial f_j}{\partial x_i}(p) \right]_{i=0,\ldots,6}^{j=0,\ldots,9}
\]
is a Lagrangian of $(I_2,Q)$, which Mukai denotes $W_{p}^{\perp}$.

Let $\OG$ denote the ten-dimensional orthogonal Grassmannian
parametrizing Lagrangian subspaces of $(I_2,Q)$. $\OG$ has a natural
embedding in $\Pro^{15}$ by mapping a Lagrangian to its half spinor.

\begin{theorem}[Mukai, 1995]
  Let $C$ be a smooth genus 7 curve with no $g^{1}_{2}$, $g^{1}_{3}$,
  or $g^{1}_{4}$.
    \begin{enumerate}
    \item The map
      \[
  \begin{array}{cccccc}
    \rho: & C & \rightarrow & \OG & \rightarrow & \Pro^{15}\\
     & p & \mapsto & [W_p^{\perp}]
  \end{array}
  \]
is an embedding of $C$.
\item The image $\rho(C)$ is the intersection $(P \cap \OG)$ of a 6-dimensional projective linear
subspace $P \subset \Pro^{15}$ with the orthogonal Grassmannian, and $C$ is canonically embedded in $P$.
\item $\Gr(7,16) \git \Spin(10)$ is a birational model of $\M_7$.
\end{enumerate}
\end{theorem}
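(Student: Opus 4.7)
The plan is to prove the three parts in sequence. All three ultimately rest on the Enriques--Petri theorem: under the given numerical hypotheses the canonical ideal is generated by the ten quadrics of $I_2$ and cuts $C$ out scheme-theoretically in $\Pro^6$, so the Jacobian of $(f_0,\ldots,f_9)$ has rank exactly $5$ at every point of $C$.

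For (1), I would first verify that $W_p^\perp$ is Lagrangian in $(I_2, Q)$. Its dimension equals $5$ by the Enriques--Petri rank statement. Its isotropy under $Q$ comes from differentiating the relation $\sum a_{jk} f_j f_k = 0$ determining $Q$ and restricting to $p \in C$: the terms involving $f_k(p)$ vanish, leaving a quadratic identity on the gradients $\nabla f_j(p)$ that encodes $Q|_{W_p^\perp} = 0$. Next I would identify $W_p^\perp$ with the annihilator of the projectivized tangent line $T_pC \subset \Pro^6$, so that injectivity of $\rho$ reduces to showing $T_pC \neq T_qC$ whenever $p \neq q$: a line tangent to $C$ at two distinct points $p,q$ yields $\dim|2p+2q| \geq 2$ by the geometric Riemann--Roch theorem, producing a $g^1_4$ after fixing base points, contradicting the hypothesis. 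A parallel first-order argument rules out stationary tangent directions and gives $d\rho_p \neq 0$.

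For (2), the main step is $\rho^*\mathcal{O}_{\Pro^{15}}(1) \cong K_C$. The half-spinor embedding of $\OG$ uses the ample generator of $\mathrm{Pic}(\OG) \cong \mathbb{Z}$, a square root of the restricted Plücker bundle on $\Gr(5,10)$; pulling this generator back along the Gauss-like map $p \mapsto W_p^\perp$ and identifying the result via the conormal sequence of the canonical embedding gives $K_C$. Since $h^0(K_C) = 7$, the image $\rho(C)$ spans a $6$-dimensional linear subspace $P \subset \Pro^{15}$, and $C \to P$ recovers the canonical embedding. The image has degree $\deg K_C = 12$, while $\OG \subset \Pro^{15}$ has degree $12$ (a standard spinor-variety computation), so a sufficiently general codimension-$9$ linear section $P \cap \OG$ is a curve of degree $12$; combined with $\rho(C) \subseteq P \cap \OG$ and the matching degrees, this forces equality.

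For (3), dimensions balance: $\dim \M_7 = 18 = 63 - 45 = \dim \Gr(7,16) - \dim \Spin(10)$. Part (2) yields a rational map $\phi: \M_7 \dashrightarrow \Gr(7,16) \git \Spin(10)$ sending $[C]$ to the class of the $7$-dimensional subspace of the half-spinor representation $k^{16}$ lifting $P$; this is $\Spin(10)$-equivariant because distinct identifications of $(I_2,Q)$ with the standard $10$-dimensional quadratic space differ by an element of $\SO(10)$, which lifts to $\Spin(10)$. Birationality follows by constructing an inverse: for a generic $\widehat{P} \subset k^{16}$, the curve $P \cap \OG$ is a smooth canonical genus-$7$ curve recovering $C$ up to the $\Spin(10)$-action, so $\phi$ is generically injective, and dominance follows from dimensions together with exhibiting a single Mukai-general curve in the image. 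The main obstacle I anticipate is the identification in (2): degree and dimension considerations only pin down $\rho^*\mathcal{O}_{\Pro^{15}}(1)$ up to torsion and up to a power, and showing it equals $K_C$ exactly requires either a direct computation in half-spinor coordinates or a careful invariant-theoretic identification of the generator of $\mathrm{Pic}(\OG)$. A related subtlety is establishing transversality of $P \cap \OG$ for a generic Mukai-general $P$, which is needed both for the degree count in (2) and for smoothness of $C_P$ in the inverse construction of (3).
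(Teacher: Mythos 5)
This theorem is stated in the paper as a cited result from \cite{Mukai} (Theorem 0.4 and Prop.~5.2); the paper under review does not give a proof of it, so there is no argument of the paper's own to compare against. Your sketch is a reasonable reconstruction of Mukai's strategy---Enriques--Petri for rank $5$, isotropy of $W_p^\perp$ by differentiating the defining relation of $Q$ and restricting to $C$, injectivity via Clifford-type restrictions, the half-spinor embedding, span and degree, and a dimension count for birationality---but it has genuine gaps at precisely the two places you flag, and one of your auxiliary steps is phrased loosely.

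The degree argument in (2) requires that $P=\langle\rho(C)\rangle$ meet $\OG$ in a one-dimensional reduced scheme. But $P$ is the \emph{specific} $\Pro^6$ attached to $C$, not a generic codimension-$9$ linear section, so you cannot appeal to Bertini or ``sufficient generality''; establishing that $\dim(P\cap\OG)=1$ and that the intersection is not excess is one of the substantive steps of Mukai's theorem, not a formality. The identification $\rho^*\mathcal{O}_{\Pro^{15}}(1)\cong K_C$ likewise requires an explicit computation in half-spinor coordinates (which Mukai carries out through the Pfaffian parametrization recalled in Section~2 of this paper); $\operatorname{Pic}(\OG)\cong\mathbb{Z}$ only determines the pullback up to a power of the generator, as you note. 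In (1), once $T_pC=T_qC$ geometric Riemann--Roch gives $\dim|2p+2q|\geq 2$, which already contradicts Clifford's theorem for a non-hyperelliptic curve (ensured by no $g^1_2$); if you instead remove a base point from the $g^2_4$ you obtain a $g^1_3$, not a $g^1_4$, so the detour through ``a $g^1_4$ after fixing base points'' should be rephrased or replaced by the direct Clifford contradiction. The immersion step (``rules out stationary tangent directions'') is only gestured at and does need an actual argument about the differential of the Gauss-type map $p\mapsto W_p^\perp$, which involves the second fundamental form and is not an automatic corollary of the injectivity argument. Finally, in (3) the dimension count and equivariance are correct, but dominance and generic injectivity of $\phi$ require the inverse construction to actually produce a smooth Mukai-general canonical curve from a general $P\in\Gr(7,16)$, which rests on the same transversality left open in (2). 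For all of these, the place to look is \cite{Mukai} directly, not the present paper.
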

See  \cite{Mukai}*{Theorem 0.4 and Prop.~5.2}.


Let $S^{+}$ be the half-spin representation of $\Spin(10)$. We have
$\dim S^{+} = 16$. A character calculation shows that there exist $\Spin(10)$-invariant
polynomials on $\mywedge^7 S^{+}$; see \codelink{https://faculty.fordham.edu/dswinarski/MukaiModelOfM7/v1/1.1.htm}{1.1}. It follows that a general point of
$\Gr(7,16)$ is $\Spin(10)$-semistable.  Also, Farkas and Verra give
some  $\Spin(10)$-semistability results for the related quotient $\Hilb(\OG) \git \Spin(10)$ in \cite{FV}.

However, several questions remain open. Is every smooth genus 7 curve with no $g^{1}_{2}$, $g^{1}_{3}$,
or $g^{1}_{4}$ $\Spin(10)$-semistable? Which schemes occur as intersections $P \cap
\OG$, and when is $[P]$ $\Spin(10)$-semistable?

As a first step, we study three examples of singular curves.
\begin{center}
\begin{tabular}{ll}
Example 1: & $\CCusp$, the $7$-cuspidal curve with heptagonal symmetry\\
Example 2: & $\CRib$, the balanced ribbon of genus 7\\
  Example 3: & $\CNod{t}$, a family of reducible nodal curves degenerating to three \\
  & trivalent graph curves (in the sense of Bayer and Eisenbud)
\end{tabular}
\end{center}
The rationale for these choices is as follows. The orthogonal
Grassmannian $\OG \subset \Pro^{15}$ has the following Betti
table, displayed following \texttt{Macaulay2}'s conventions. See \codelink{https://faculty.fordham.edu/dswinarski/MukaiModelOfM7/v1/1.2.htm}{1.2}.
\begin{center}
\begin{verbatim}
                           0  1  2  3  4 5
                    total: 1 10 16 16 10 1
                        0: 1  .  .  .  . .
                        1: . 10 16  .  . .
                        2: .  .  . 16 10 .
                        3: .  .  .  .  . 1
\end{verbatim}
\end{center}
A Betti table with at most one nonzero entry per column is called
\textit{pure}. See \cite{ES}.

If a linear section $P \cap \OG$ is one-dimensional, it must also have this Betti table. Curves with pure Betti tables have been the subject
of much study for several years in connection with Green's
Conjecture. $g$-cuspidal curves, ribbons, and graph curves were
proposed as candidates for proving Green's
Conjecture for a generic curve \cite{E}. (This strategy was recently completed for 
$g$-cuspidal curves by the results of \cite{AFPRW} and for ribbons by
the results of \cite{RS}.)

Specific $g$-cuspidal curves, ribbons, and graph curves with 
automorphisms have also been used to study the
Hassett-Keel program for $(\M_g,\Delta)$. One of Hassett and Keel's
conjectures was that the canonical model of $\M_g$ could be
constructed by variation of GIT applied to quotients of spaces
parametrizing syzygies of curves. The GIT semistability of the canonically
embedded balanced ribbon was established in \cite{AFS} for finite
Hilbert stability and in \cite{DFS2016} for first syzygies. The $7$-cuspidal curve and the graph
curve $\CNod{0}$ studied here also have GIT semistable second Hilbert
points and first syzygies (Swinarski, unpublished). Since these three
examples of singular curves
appear in the model of $\M_7$ given by first syzygies, it was natural
to ask whether they also appear in Mukai's model of $\M_7$.

\subsection{Outline of the paper} In Section \ref{sec:background}, we
recall the notation of Mukai's construction. In Sections
\ref{sec:cusp} and \ref{sec:ribbon}
we obtain the $7$-cuspidal curve with heptagonal
symmetry and the balanced genus 7 ribbon as intersections $P \cap \OG$
for some explicit $P \in
\Gr(7,16)$. In Section \ref{sec:graph curves} we describe a 1-parameter family
of reducible nodal curves and obtain a general member of this family
as the intersection $P \cap \OG$ for some explicit $P \in
\Gr(7,16)$. We also study the limits as this family degenerates in the
Hilbert scheme $\Hilb(\mathbb{P}^{15},12t-6)$ and in the Grassmannian
$\Gr(7,16)$ and show that two of these limits are GIT-unstable.

In Section \ref{sec:constructing invariants} we describe how to
construct a $\Spin(10)$-invariant polynomial $F_{5\omega_1} \in (\Sym^4\mywedge^7 S^{+})^{\Spin(10)}$. 
Finally, we evaluate $F_{5\omega_1}$ on these three examples to deduce
$\Spin(10)$-semistability for the $7$-cuspidal curve with heptagonal
symmetry, the balanced genus 7 ribbon, and the general member of the 
family of reducible nodal curves.

\subsection*{Software and code links} This project relies heavily on
calculations in \texttt{Macaulay2}, \texttt{Magma}, and
\texttt{Sage} \cites{M2,Magma,Sage}. In this document, we report the inputs to these
calculations and describe the results. On the author's webpage
\cite{Code}, we have posted transcripts of interactive sessions
for the shorter calculations and the input
and output files used for the lengthier calculations.  We cite each calculation in
the text of this document using a phrase of the form ``see
\codelink{https://faculty.fordham.edu/dswinarski/MukaiModelOfM7/v1/index.htm}{x.y}''
which includes a link to the relevant calculation.

\subsection*{Acknowledgements} It is a pleasure to thank 
Patricio Gallardo, Jesus Martinez-Garcia, Han-Bom Moon, and Ian
Morrison for several helpful discussions related to this work. This
work is a sequel to a project begun by the AIM Square ``Computational aspects of GIT with a view of moduli spaces''
that met between 2018--2020
consisting of Gallardo, Martinez-Garcia, Moon,
and the author.

\section{Background: Mukai's construction} \label{sec:background}

Let $V$ be a $2n$-dimensional vector space over $\mathbb{C}$. (Note:
Mukai's results hold over an algebraically closed field of any
characteristic. We will state our results only for $\mathbb{C}$, but
it seems likely that some of them may generalize to positive
characteristic as well.) Let $Q$ be a
full rank quadratic form on $V$. Following Chevalley and Mukai's
conventions in \cites{Chevalley,Mukai}, let
$B(x,y) = Q(x+y)-Q(x)-Q(y)$. (Note: Fulton and Harris
use a different convention in \cite{FH}.) Then $B(x,x) = 2Q(x)$. Let $C(Q)$ be the Clifford algebra satisfying  $v \cdot w + w \cdot v = B(v,w) \cdot 1.$           

Let $U_0$ and $U_{\infty}$ be two complementary Lagrangians, and let $S^{+} = \mywedge^{\even} U_{\infty}$, $S^{-} = \mywedge^{\odd} U_{\infty}$.

Let $e_{-1},\ldots,e_{-n}$ be a basis of $U_0$, and let
$e_{1},\ldots,e_{n}$ be a basis of $U_{\infty}$. $e_{-i}$ acts on
$\mywedge  U_{\infty}$ as the contraction of $e_i$, and $e_i$ acts on
$\mywedge  U_{\infty}$  as wedging on the left by $e_i$.  Extending
these actions by linearity yields an endomorphism $\varphi_v$ for any $v \in
V$.

For each subset $I = \{i_1,\ldots,i_k\} \subset \{1,\ldots,5\}$ with
$k$ even and $i_1 < \ldots< i_k$, let $e_I = e_{i_1} \wedge \cdots
\wedge e_{i_k}$. This gives a basis of $S^{+}$. Let $x_I$ be the
corresponding coordinates on $\Pro(S^{+})$.

Let $U$ be a Lagrangian of $(V,Q)$. The half spinor $s_U$ of $U$ is an
element of $S^{+} \cup S^{-}$ satisfying $\varphi_u(s_U) = 0$ for all
$u \in U$.

We use two
  approaches to compute half spinors.

  \textit{Approach 1:} Suppose \( U \cap U_{\infty} = \{0\} \). (This is the
  generic case.) Then we can find a basis of \(U\) of the form \(u_i = e_{-i}
- \sum_{j=1}^{5} a_{ij} e_{j}.\) The coefficients \(a_{ij}\) yield a
\(5 \times 5\) skew-symmetric matrix \(A\). In the proof of
\cite{Mukai}*{Prop.~1.5}, Mukai gives a formula for \(s_U\) in terms
of the Pfaffians of minors \(A\). Specifically, let $A_I$ denote the
minor of $A$ obtained by selecting the rows and columns indexed by
$I$. Then the coordinate $x_I$ in $[s_U]$ is given by $\operatorname{Pf}(A_I)$.

\textit{Approach 2:} For any Lagrangian $U$, we may compute the operators \(\varphi_u\) for
    a basis of \(U\) and intersect their kernels to obtain a suitable
    \(s_U\).

Approach 2 applies to any Lagrangian \(U\), but it is typically slower than Approach 1, so we only use Approach 2 when \( \dim(U \cap U_{\infty}) > 0. \)

Mukai gives the following equations of the orthogonal Grassmannian
$\OG \subset \Pro(S^{+})$ in \cite{Mukai}*{(0.1)}:
\[
\begin{array}{l}
x_{0} x_{2345}-x_{23} x_{45}+x_{24} x_{35}-x_{25} x_{34},\\
x_{12} x_{1345}-x_{13} x_{1245}+x_{14} x_{1235}-x_{15} x_{1234},\\
x_{0} x_{1345}-x_{13} x_{45}+x_{14} x_{35}-x_{15} x_{34},\\
x_{12} x_{2345}-x_{23} x_{1245}+x_{24} x_{1235}-x_{25} x_{1234},\\
x_{0} x_{1245}-x_{12} x_{45}+x_{14} x_{25}-x_{15} x_{24},\\
x_{13} x_{2345}-x_{23} x_{1345}+x_{34} x_{1235}-x_{35} x_{1234},\\
x_{0} x_{1235}-x_{12} x_{35}+x_{13} x_{25}-x_{15} x_{23},\\
x_{14} x_{2345}-x_{24} x_{1345}+x_{34} x_{1245}-x_{45} x_{1234},\\
x_{0} x_{1234}-x_{12} x_{34}+x_{13} x_{24}-x_{14} x_{23},\\
x_{15} x_{2345}-x_{25} x_{1345}+x_{35} x_{1245}-x_{45} x_{1235}
\end{array}
\]

\section{The $7$-cuspidal curve with heptagonal
  symmetry}\label{sec:cusp}

Canonically embedded \(g\)-cuspidal curves
can be obtained as hyperplane sections of the \emph{tangent
  developable} of the rational normal curve. See \cites{E, AFPRW, EL}
for more details.

For $g=7$, the tangent developable in
\(\mathbb{P}^{7}\) is parametrized by mapping
$(s,t,u,v)$ to 
\[
[7 s^6 u : 6 s^5 t u+s^6 v : 5 s^4 t^2 u+2 s^5 t v :
4 s^3 t^3 u+3 s^4 t^2 v : 3 s^2 t^4 u+4 s^3 t^3 v : 2 s t^5 u+5 s^2
t^4 v :  t^6 u+6 s t^5 v : 7 t^6 v].
\]

We eliminate the parameters to obtain equations of the tangent
developable in $k[y_0,\ldots,y_7]$; see \codelink{https://faculty.fordham.edu/dswinarski/MukaiModelOfM7/v1/3.1.htm}{3.1}.  Then, by taking \(y_7=y_0\) we
get equations of a rational curve $\CCusp$ with seven
cusps. The cusps occur where the hyperplane section meets the
diagonal, that is,  at the seventh roots of unity $(s/t)^7=1$. Hence this curve has the dihedral group
\(D_7\) of order \(14\) as its automorphism group; see \codelink{https://faculty.fordham.edu/dswinarski/MukaiModelOfM7/v1/3.2.htm}{3.2}.

This yields the following 10 quadrics generating $I_2$.
\[
\begin{array}{rclcrcl}
 f_0 &= &3 y_5^2-4 y_4 y_6+y_3 y_0 & \qquad &  f_5 &= &y_3 y_4-2 y_1 y_6+y_0 y_0\\
 f_1 &= &2 y_4 y_5-3 y_3 y_6+y_2 y_0 & \qquad& f_6 &= &5 y_2 y_4-8 y_1 y_5+3 y_0 y_6\\
 f_2 &= &5 y_3 y_5-8 y_2 y_6+3 y_1 y_0 & \qquad& f_7 &=& 5 y_3^2-9 y_1 y_5+4 y_0 y_6\\
 f_3 &= &3 y_2 y_5-5 y_1 y_6+2 y_0 y_0 &\qquad & f_8 &= &2 y_2 y_3-3 y_1y_4+y_0 y_5 \\
 f_4 &= &5 y_4^2-9 y_2 y_6+4 y_1 y_0 &\qquad & f_9 &= &3 y_2^2-4 y_1 y_3+y_0 y_4
\end{array}
\]
The automorphisms are given by the maps $y_i \mapsto \zeta_7^i
y_i$ and $[y_0:y_1:y_2:y_3:y_4:y_5:y_6] \mapsto [y_0:y_6:y_5:y_4:y_3:y_2:y_1]$.

Next, we compute \( \mathrm{ker}(\mathrm{Sym}^2(I_2) \rightarrow I_4)\)
in \texttt{Macaulay2}, and find that these quadrics satisfy the following quadratic form.
\[-f_{3}^{2}+\frac{9}{2}f_{3}f_{5}-5\,f_{5}^{2}-\frac{3}{10}f_{2}f_{6}+\frac{1}{5}f_{4}f_{7}-\frac{3}{2}f_{1}f_{8}+f_{0}f_{9}
  = 0.
\]

We change the basis of $I_2$ as follows.
\[
\begin{array}{rclcrcl}
 g_0 &= &-10 f_0 & \qquad &  g_5 &= &f_9\\
 g_1 &= &15 f_1 & \qquad & g_6 &= &f_8\\
 g_2 &= &3 f_2 & \qquad & g_7 &=& f_6\\
 g_3 &= &-2f_4 & \qquad &g_8 &= &f_7\\
 g_4 &= &-10f_3+25 f_5 & \qquad & g_9 &= &-f_3+2f_5
\end{array}
\]
Then $\sum_{i=0}^{4} g_i g_{i+5} = 0$; see \codelink{https://faculty.fordham.edu/dswinarski/MukaiModelOfM7/v1/3.3.htm}{3.3}.

Next, we arbitrarily choose eight smooth points $p_0,\ldots,p_7$ in general position on $\CCusp$. These points are given by the following values of
\( (s,t,u,v)\) under the parametrization shown above:
$ (-1,1,1,1)$, $(1,2,64,1)$, $(2,1,1,64)$, $(1,3,729,1)$, $(3,1,1,729)$, $(1,-2,64,1)$,
$(-2,1,1,64)$, $(1,-3,729,1)$. (Seven points are sufficient to determine the linear space $\PCusp$; the eighth point will be
used to prove that the map $\rho: \CCusp \rightarrow \PCusp$ is an embedding.)

To each point on $\CCusp$ we associate the Lagrangian that Mukai denotes
\(W_p^{\perp}\), which we interpret as the row space of the
Jacobian matrix \( \left[\frac{\partial g_j}{\partial x_i}(p)\right]\).

Next, we need to choose a pair of complementary Lagrangians $U_0$ and $U_{\infty}$. Every Lagrangian will have even-dimensional intersection with one of these and odd-dimensional intersection with the other. Mukai assumes that \(U_0 \) and
\(U_\infty\) are chosen so that \(W_{p}^{\perp}\) has even-dimensional
intersection with \(U_\infty\). We choose \(U_0  = \Span \{
g_0,\ldots,g_4\} \) and
\(U_\infty = \Span \{g_5,\ldots,g_{9}\}\) and check that our choices satisfy this
property.

Next, we compute the half spinors $s_i$ of the Lagrangians $W_{p}^{\perp}$
associated to the points $p_i$. We find that $s_0,\ldots,s_7$ span the 7-dimensional
vector space given by the row space of the following matrix.
\begin{equation} \label{MCusp}
\MCusp= \left[\begin{array}{rrrrrrrrrrrrrrrr}
0 & 0 & 0 & 0 & -\frac{3}{5} & 1 & 0 & 0 & 0 & 0 & 0 & 0 & 0 & 0 & 0 & 0\\
0 & 0 & 0 & 0 & 0 & 0 & 0 & \frac{1}{5} & 1 & 0 & 0 & 0 & 0 & 0 & 0 & 0\\
0 & 0 & 0 & 0 & 0 & 0 & 0 & 0 & 0 & \frac{3}{4} & 1 & 0 & 0 & 0 & 0 & 0\\
30 & 0 & 0 & 0 & 0 & 0 & 0 & 0 & 0 & 0 & 0 & 1 & 0 & 0 & 0 & 0\\
0 & 0 & 0 & 0 & 0 & 0 & 0 & 0 & 0 & 0 & 0 & 0 & \frac{8}{9} & 1 & 0 & 0\\
0 & -2 & 0 & 0 & 0 & 0 & 0 & 0 & 0 & 0 & 0 & 0 & 0 & 0 & 1 & 0\\
0 & 0 & 0 & -\frac{15}{2} & 0 & 0 & 0 & 0 & 0 & 0 & 0 & 0 & 0 & 0 & 0 & 1
\end{array} \right]
\end{equation}

Let $\PCusp = \mathbb{P}(\operatorname{RowSpace} \MCusp)$. We check that
$\PCusp \cap \OG \cong \CCusp$. To do this, we fix an isomorphism
$h: \PCusp \rightarrow \Pro^6$, then compute the unique
element of $\operatorname{PGL}(7)$ mapping $p_i$ to $h(s_i)$ for
$i=0,\ldots,7$, and check that this maps $\CCusp$ to $h(\PCusp \cap
\OG)$; see \codelink{https://faculty.fordham.edu/dswinarski/MukaiModelOfM7/v1/3.4.htm}{3.4}.

These calculations establish the following proposition.

\begin{proposition} \label{prop:PCusp}
  Let $\CCusp$ be the $7$-cuspidal curve with heptagonal
  symmetry. Then $\rho: \CCusp^{\operatorname{sm}} \rightarrow
  \mathbb{P}^{15}$ extends to an embedding, and $\rho(\CCusp) = \PCusp  \cap \OG$, where $\PCusp = \mathbb{P}(\operatorname{RowSpace} \MCusp)$.
\end{proposition}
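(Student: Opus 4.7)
My plan is to prove the proposition by an explicit computation, leveraging the rational parametrization of $\CCusp$ and Mukai's formula for half-spinors. First, for each sample point $p_k$, $k = 0, \ldots, 7$, I would compute the half-spinor $s_k \in \Pro(S^+)$ as follows: form the $7 \times 10$ Jacobian matrix $J_k = [\partial g_j/\partial x_i(p_k)]$, identify its row space with the Lagrangian $W_{p_k}^\perp \subset I_2$, and (after verifying that $W_{p_k}^\perp \cap U_\infty = 0$) apply Approach 1 of Section \ref{sec:background}: extract the $5 \times 5$ skew-symmetric matrix $A$ from a basis $u_i = e_{-i} - \sum_j a_{ij} e_j$ of $W_{p_k}^\perp$, and record the spinor coordinates $x_I(s_k) = \operatorname{Pf}(A_I)$.

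Second, a linear-algebra computation on the $8 \times 16$ coordinate matrix of $s_0, \ldots, s_7$ verifies that these eight points span exactly the $7$-dimensional row space of $\MCusp$, so $s_k \in \PCusp$ for all $k$ and any seven of the points are linearly independent. Since the spinor coordinates $\operatorname{Pf}(A_I)$ are polynomial functions of the parameters of the rational normalization $\mathbb{P}^1 \to \CCusp$, the rational map $\rho$ extends to a morphism of $\mathbb{P}^1$ into $\Pro^{15}$, whose image lies in $\PCusp \cap \OG$ by closedness.

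To upgrade this into the desired isomorphism $\CCusp \cong \PCusp \cap \OG$, I would fix an explicit linear isomorphism $h: \PCusp \xrightarrow{\sim} \Pro^6$ and solve for the unique element $\phi \in \operatorname{PGL}(7)$ with $\phi(p_k) = h(s_k)$ for $k = 0, \ldots, 6$; the eighth point serves as an independent check that $\phi(p_7) = h(s_7)$. I would then verify computationally that the pullback $\phi^*\, I(h(\PCusp \cap \OG))$ equals the ideal of $\CCusp$ in $k[y_0,\ldots,y_6]$, which forces $\phi$ to restrict to an isomorphism $\CCusp \to h(\PCusp \cap \OG)$ of closed subschemes; composing with $h^{-1}$ and the inclusion $\PCusp \hookrightarrow \Pro^{15}$ then produces the asserted extension of $\rho$. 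The main obstacle I expect is confirming that $\PCusp \cap \OG$ is reduced and irreducible, as a priori the intersection could carry extra components or embedded structure; I would discharge this via a primary decomposition in \texttt{Macaulay2} together with a check that the intersection has arithmetic genus $7$ and degree $2\cdot 7 - 2 = 12$, matching a canonically embedded genus $7$ curve.
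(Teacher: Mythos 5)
Your proposal matches the paper's computational strategy almost exactly: compute the half spinors via Mukai's Pfaffian formula (Approach~1), check that $s_0,\ldots,s_7$ span the $7$-dimensional row space of $\MCusp$, fix an isomorphism $h : \PCusp \to \Pro^6$, solve for the element $\phi \in \operatorname{PGL}(7)$ sending $p_i$ to $h(s_i)$, and verify computationally that $\phi$ carries $\CCusp$ onto $h(\PCusp \cap \OG)$.

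Two small points are worth flagging. First, to determine a unique element of $\operatorname{PGL}(7)$ you need $n+2 = 8$ points in general position in $\Pro^6$, not seven: the stabilizer of seven points in general position in $\Pro^6$ is a six-dimensional torus, so matching only $\phi(p_k) = h(s_k)$ for $k = 0,\ldots,6$ leaves a six-parameter family and the eighth point cannot then serve as an ``independent check.'' The paper uses all eight points ($i = 0,\ldots,7$) precisely to obtain uniqueness of $\phi$; it is the subsequent ideal comparison, not the eighth interpolation condition, that is the genuine verification. (Seven points do suffice to determine the linear span $\PCusp$, which is where the role of the seventh versus eighth point can be confused.) Second, your closing concern about reducedness and irreducibility of $\PCusp \cap \OG$ is already subsumed by the ideal-equality check $\phi^*\, I(h(\PCusp \cap \OG)) = I(\CCusp)$: once this equality holds, $\PCusp \cap \OG$ is isomorphic to $\CCusp$ as a scheme, so a separate primary decomposition and genus/degree computation, while harmless, buy nothing further.
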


\section{The balanced genus 7 ribbon}\label{sec:ribbon}
 Ribbons are dimension 1, generically nonreduced schemes that are double
  structures on the underlying reduced curve. Bayer and Eisenbud write
  in their seminal paper on ribbons that ribbons are limits of the
  canonical models of smooth curves as they degenerate to a
  hyperelliptic curve \cite{BE}. A longstanding prediction of  the Hassett-Keel program for  $(\overline{M}_g,\Delta)$ is that the locus of hyperelliptic curves is flipped to the ribbon locus.

We consider a specific example. In each odd genus $g=2k+1$ with $g
\geq 5$ there is a ribbon called the \emph{balanced ribbon}, which is
characterized by having a $ \mathbb{G}_m$-action with weights $
-k,\ldots,+k$ as well as an involution interchanging the positive and
negative weight spaces. Equations of the canonically embedded
genus 7 balanced ribbon can be obtained using \cite{DFS2014}*{Cor.~4.8}.
\[
\begin{array}{rclcrcl}
f_0 &=& y_2 y_3-2 y_1 y_4+y_0 y_5 & \qquad & f_5 &=  -y_1 y_2+y_0 y_3\\
f_1 &=&  y_2 y_4-2 y_1 y_5+y_0 y_6 & \qquad & f_6 &=  -y_2^2+y_1 y_3\\
f_2 &=&  y_3^2-2 y_2 y_4+y_1 y_5 & \qquad & f_7 &=  -y_4^2+y_3 y_5\\
f_3 &=& y_3 y_4-2 y_2 y_5+y_1 y_6 & \qquad & f_8 &=  -y_4 y_5+y_3 y_6,\\
f_4 &=&  -y_1^2+y_0 y_2 & \qquad & f_9 &=  -y_5^2+y_4 y_6\\
\end{array}
\]

The variables $y_0,\ldots,y_6$ have weights $-3,\ldots,3$, and the
involution acts by sending $y_0,\ldots,y_6$ to $y_6,\ldots,y_0$.

Next, we compute $ \mathrm{ker}(\mathrm{Sym}^2(I_2) \rightarrow I_4)$
in \texttt{Macaulay2}, and find that these quadrics satisfy the following quadratic form.
\[
\frac{1}{2}f_1f_2 - \frac{1}{2}f_0f_3+f_6f_7-\frac{1}{2}f_5f_8 + f_4f_9  = 0.
\]
We reorder the quadrics so that the $\mathbb{G}_m$ weights are $-4,-3,-2,-1,0,4,3,2,1,0$, and scale to make the coefficients of the quadratic form 1.
\[
\begin{array}{rclcrcl}
 g_0 &= &2 f_4 & \qquad &  g_5 &= &f_9 \\
 g_1 &= &- f_5 & \qquad & g_6 &= &f_8 \\
 g_2 &= &2 f_6 & \qquad &   g_7 &= &f_7 \\
 g_3 &= &-f_0 & \qquad & g_8 &= &f_3 \\
 g_4 &= & f_1 & \qquad & g_9 &= & f_2
\end{array}
\]
Then $\sum_{i=0}^{4} g_i g_{i+5} = 0$; see \codelink{https://faculty.fordham.edu/dswinarski/MukaiModelOfM7/v1/4.1.htm}{4.1}.

Next, we compute the spin representation of the automorphism group of the
  balanced ribbon. Let $e_{-1},\ldots,e_{-5}$ be $g_0,\ldots,g_4$, and let $e_1,\ldots,e_5$ be $g_5,\ldots,g_9$. Then $ \mathbb{G}_m$ acts on the basis $e_{-1},\ldots,e_5$ by 
  \[
\operatorname{Diag}(t^{-4}, t^{-3}, t^{-2}, t^{-1}, 1,  t^{4}, t^{3}, t^{2}, t^{1}, 1),
  \]
  and the involution acts on this basis by
\[
  \begin{array}{rclcrcl}
  e_{-1} & \mapsto & \frac{1}{2}e_1 & \qquad & e_{-4} & \mapsto & -e_4 \\  
    e_{-2} & \mapsto &-e_2 & \qquad & e_{-5} & \mapsto &  e_{-5} \\
    e_{-3} & \mapsto & \frac{1}{2}e_3 & \qquad &  e_{5} & \mapsto &e_{5} 
\end{array}
\]
  
To lift these elements to $\mathrm{Spin}(Q)$, we factor them as a product of
reflections, lift each reflection to the Clifford algebra, and scale. We find that the $\mathbb{G}_m$ action lifts to the following two elements in $\Spin(10)$.
  \[
  \pm t^5 \prod_{j=1}^{4} ( e_{-j} + e_{j}) (e_{-j} + t^{j-5} e_{j}).
  \]
  The involution lifts to the elements
  \[
\pm    2 (e_{-4}+e_4) (e_{-3}-\frac{1}{2}e_3) (e_{-2}+e_2) (e_{-1}-\frac{1}{2}e_1).
\]
Thus, the $\mathbb{G}_m$ action on the basis
\[ 1, e_{12}, e_{13},
e_{14}, e_{15}, e_{23}, e_{24}, e_{25}, e_{34}, e_{35}, e_{45},
e_{1234}, e_{1235}, e_{1245}, e_{1345}, e_{2345}
\]
of $S^{+}$ is given by
\[\operatorname{Diag}( t^{-5}, t^{2}, t, 1,  t^{-1}, 1, t^{-1}, t^{-2},t^{-2}, t^{-3}, t^{-4}, t^5, t^4, t^3, t^2, t)
\]
and the involution acts on this basis as follows.
\[
  \begin{array}{rclcrcl}
  1 & \mapsto &\frac{1}{2}e_{1234} & \qquad & e_{15} & \mapsto & e_{2345}\\
  e_{12} & \mapsto & e_{34} & \qquad &e_{25} & \mapsto & \frac{1}{2} e_{1345} \\
  e_{13} & \mapsto & 2e_{24} & \qquad &e_{35} & \mapsto & e_{1245}\\
  e_{14} & \mapsto & e_{23} & \qquad &e_{45} & \mapsto & \frac{1}{2}e_{1235}
\end{array}
\]
See \codelink{https://faculty.fordham.edu/dswinarski/MukaiModelOfM7/v1/4.2.htm}{4.2}.

We seek a six-dimensional projective linear subspace $\PRib$ such that $\PRib \cap \OG \cong \CRib$. We know the weights of the $\mathbb{G}_m$ action on the canonically
embedded balanced ribbon, and that the involution swaps positive and negative weight spaces. We use this to narrow down the search for $\PRib$.

The $\mathbb{G}_m$ weights on the ribbon are $-3,-2,-1,0,1,2,3$, while the $\mathbb{G}_m$ weights on $\mathbb{P}(S^{+})$ are (in increasing order) $-5,-4,-3,-2,-2,-1,-1,0,0,1,1,2,2,3,4,5$. By comparing these two lists, we see that we must kill the $\pm 5$ and $\pm 4$ weight spaces; retain the $\pm 3$ weight spaces; and select a multiplicity 1 submodule of the multiplicity 2 weight spaces for weights $\pm 2$, $\pm 1$, and 0.

The $\pm 4$ and $\pm 5$ weight spaces are spanned by $ x_{45}$,
$x_{1235}$, $x_0$, and $x_{1234}$.  Thus we set $x_{45} = x_{1235}=x_0=x_{1234}=0$. This gives us 4 of the 9 hyperplanes we seek to define the linear space $P$.

Next, consider the weight 0 space. This is spanned by $x_{14}$ and $x_{23}$. The involution acts on this subspace as $ x_{14} \mapsto x_{23}$. Since the involution is trivial on the weight 0 space for the balanced ribbon, we set $ x_{14}=x_{23}=0.$. This gives a fifth hyperplane.

  Next, consider the weight $\pm 1$ space. It is a multiplicity two module with respect to the automorphism group. A general submodule can be written in the form $ \mathrm{Span}\langle c_1 x_{13}+c_{2}x_{2345}, \frac{1}{2}c_{1}x_{24} + c_2 x_{15}\rangle$ for some constants $c_1$ and $c_2$. 
  Assume $c_1\neq 0$. Then we can scale these to obtain two more hyperplanes $x_{13} +c_2 x_{2345} = 0$ and $ \frac{1}{2}x_{24} + c_2 x_{15}=0.$

  Similarly, the weight $\pm 2$ space is a multiplicity two module with respect to the automorphism group. A general submodule can be written in the form $ \mathrm{Span}\langle c_3 x_{12}+c_{4}x_{1345}, c_{3}x_{34} + \frac{1}{2}c_4 x_{25}\rangle$ for some constants $c_3$ and $c_4$. 
  Assume $c_3\neq 0$. Then we can scale these to obtain the hyperplanes $x_{12} +c_4 x_{1345} = 0$ and $ x_{34} + \frac{1}{2}c_4 x_{25}=0.$

  We have thus found nine linearly independent hyperplanes with two
  unknown parameters $c_2$ and $c_4$. For each pair $c_2,c_4$, let $P_{c_2,c_4}$ be the
  six-dimensional projective linear subspace defined by these
  hyperplanes. For any values of $c_2$
  and $c_4$, the intersection of $P_{c_2,c_4}$ with the orthogonal
  Grassmannian yields a scheme with a $\mathbb{G}_m$-action with
  weights $-3,-2,-1,0,1,2,3$ and
 an  involution interchanging the positive and negative weight
  spaces. Are there any values of $c_2$ and $c_4$ that yield the balanced ribbon?

Next, choose seven of the variables \(x_I\) with weights
\(-3,-2,-1,0,1,2,3\) to use as variables on $\PRib \cong
\mathbb{P}^6$. Here we used $ y_0 = x_{1245}, y_1 = x_{1345}, y_2 = x_{2345}, y_3 =x_{14}, y_4=x_{15}, y_5 = \frac{1}{2}x_{25}, y_6 = x_{35}.$ (The choice $y_5 = \frac{1}{2} x_{15}$ is because the involution on the balanced ribbon swaps the \(\pm 2\) weight spaces, and this is the basis that has the desired action.)

Substituting the nine hyperplanes found above into Mukai's equations
for the orthogonal Grassmannian yields the following quadrics.
    \[
  \begin{array}{l}
  2 c_4 y_1^2-2 c_2 y_0 y_2, \\
  -c_4 y_5^2+c_2 y_4 y_6,\\
  c_4 y_1 y_2+y_0 y_3, \\
  -c_4 y_4 y_5-y_3 y_6, \\
  2 c_2 y_2^2+2 y_1 y_3, \\
  -c_2 y_4^2-y_3 y_5, \\
  -y_2 y_3-2 c_2 y_1 y_4+c_4 y_0 y_5, \\
  y_3 y_4+2 c_2 y_2 y_5-c_4 y_1 y_6, \\
  -y_3^2+2 c_2^2 y_2 y_4-c_4^2 y_1 y_5, \\
  y_2 y_4-2 y_1 y_5+y_0 y_6
 \end{array}
\]
See \codelink{https://faculty.fordham.edu/dswinarski/MukaiModelOfM7/v1/4.3.htm}{4.3}.

  Careful inspection reveals that with $c_2 = -1$ and $c_4 = -1$, each quadric on our list is a nonzero constant multiple of one of the balanced ribbon equations.

These calculations establish the following proposition.

\begin{proposition} \label{prop:PRib}
  Let $\CRib$ be the genus 7 balanced ribbon. Then 
  \[
  \CRib \cong \PRib  \cap \OG
\]
where
\begin{equation} \label{MRib}
\MRib= \left[\begin{array}{rrrrrrrrrrrrrrrr}
0 & 0 & 0 & 1 & 0 & 1 & 0 & 0 & 0 & 0 & 0 & 0 & 0 & 0 & 0 & 0\\
0 & 0 & 0 & 0 & \frac{1}{2} & 0 & 1 & 0 & 0 & 0 & 0 & 0 & 0 & 0 & 0 & 0\\
0 & 0 & 0 & 0 & 0 & 0 & 0 & 2 & 1 & 0 & 0 & 0 & 0 & 0 & 0 & 0\\
0 & 0 & 0 & 0 & 0 & 0 & 0 & 0 & 0 & 1 & 0 & 0 & 0 & 0 & 0 & 0\\
0 & 0 & 0 & 0 & 0 & 0 & 0 & 0 & 0 & 0 & 0 & 0 & 0 & 1 & 0 & 0\\
0 & 1 & 0 & 0 & 0 & 0 & 0 & 0 & 0 & 0 & 0 & 0 & 0 & 0 & 1 & 0\\
0 & 0 & 1 & 0 & 0 & 0 & 0 & 0 & 0 & 0 & 0 & 0 & 0 & 0 & 0 & 1
\end{array} \right]
\end{equation}

and $\PRib = \mathbb{P}(\operatorname{RowSpace} \MRib)$
\end{proposition}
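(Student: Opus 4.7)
The plan is to assemble the symmetry analysis and substitution computation above into a direct verification, so that the proof of Proposition~\ref{prop:PRib} is essentially a matter of matching coefficients in an overdetermined system.

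First, I would use the $\mathbb{G}_m$ and involution constraints developed above to cut the search for $\PRib$ down to the two-parameter family $P_{c_2,c_4}$ defined by the nine explicit linear forms in the spinor coordinates. Then I would substitute these nine forms into Mukai's ten Pfaffian equations for $\OG$, and identify the seven surviving coordinates with $y_0,\ldots,y_6$ via the chosen isomorphism $P_{c_2,c_4} \cong \mathbb{P}^6$. The result is a list of ten quadrics in $k[y_0,\ldots,y_6]$ whose coefficients depend polynomially on $c_2$ and $c_4$, and which are exactly those displayed in the text preceding the proposition.

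Second, I would pin down $(c_2,c_4)$ by matching these ten quadrics against the canonical generators of $\CRib$ listed at the start of the section. The comparison is overdetermined, so any solution is essentially forced. A direct inspection shows that $c_2 = c_4 = -1$ is a solution, and that under this substitution each of the ten substituted quadrics becomes a nonzero scalar multiple of one of the ribbon's defining quadrics. Since matching generating sets up to nonzero scalars yields equal ideals, the ideal of $P_{-1,-1} \cap \OG$ in $k[y_0,\ldots,y_6]$ agrees with the canonical ideal of $\CRib$, which gives the scheme-theoretic isomorphism claimed in the proposition.

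The hard part is not this final coefficient matching, which is mechanical bookkeeping, but the earlier reduction from the $63$-dimensional family $\Gr(7,16)$ to the two-parameter family $P_{c_2,c_4}$. This requires correctly lifting the $\mathbb{G}_m$-action and the involution of $\CRib$ to $\Spin(10)$, transporting the actions to $S^{+}$, and then using the resulting weight decomposition (together with the involution, which trivializes the weight-$0$ space and identifies the two copies at weights $\pm 1$ and $\pm 2$) to eliminate nearly all degrees of freedom in the choice of $\PRib$. Once $P_{c_2,c_4}$ is written down explicitly, the remainder of the proof is a direct computation.
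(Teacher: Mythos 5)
Your proposal is correct and follows exactly the route taken in the paper: lift the $\mathbb{G}_m$-action and involution to $\Spin(10)$, use the resulting weight decomposition of $S^{+}$ (killing weights $\pm4,\pm5$, taking the invariant line in weight $0$, and selecting one of the two copies at weights $\pm1,\pm2$) to reduce to the two-parameter family $P_{c_2,c_4}$, then substitute into Mukai's ten equations and match coefficients to force $c_2=c_4=-1$. The paper simply records this chain of computations and concludes with ``These calculations establish the following proposition,'' so there is no substantive difference between your argument and the paper's.
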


\section{A family of reducible nodal curves} \label{sec:graph curves}
Next, we study a family of reducible nodal curves. This family is a one-dimensional stratum in the boundary of $\M_7$, also known as an F-curve. 

  This family is constructed as follows. Let $G$ be the graph on 11 vertices $0,1,2,34,5,6,7,8,9,10,11$ with edges 0-1, 0-8, 0-9, 1-2, 1-10, 2-34, 2-11, 34-9, 34-5, 34-10, 5-6, 5-11,
  6-7, 6-9, 7-8, 7-10, and 8-11. We present two different views of
  this graph. See Figures \ref{K33Figure} and \ref{TrivalentSpecializationsFigure}.

\begin{figure}[h]
  \begin{center}
    \caption{The graph $G$}
        \label{K33Figure}    
\begin{tikzpicture}[scale=0.5]
  \draw (0,0)--(1,0); 
  \draw (0,0) arc (180:360:10 and 2);
  \draw (0,0)--(20,0); 
  \draw (0,0)--(2,5); 
  \draw (2,0)--(4,0); 
  \draw (2,0)--(10,5); 
  \draw (4,0) -- (9,0); 
  \draw (4,0)--(18,5); 
  \draw (9,0)--(2,5); 
  \draw (9,0)--(12,0); 
  \draw (9,0)--(10,5); 
  \draw (12,0) -- (16,0); 
  \draw (12,0)--(18,5); 
  \draw (16,0)--(18,0); 
  \draw (16,0)--(2,5); 
  \draw (18,0)--(20,0); 
  \draw (18,0)--(10,5); 
  \draw (20,0)--(18,5); 
  \filldraw[white] (0,0) circle (0.5);
  \filldraw[white] (2,0) circle (0.5);
  \filldraw[white] (4,0) circle (0.5);
  \filldraw[white] (9,0) circle (0.5);
  \filldraw[white] (12,0) circle (0.5);
  \filldraw[white] (16,0) circle (0.5);
  \filldraw[white] (18,0) circle (0.5);
  \filldraw[white] (20,0) circle (0.5);
  \filldraw[white] (2,5) circle (0.5);
  \filldraw[white] (10,5) circle (0.5);
  \filldraw[white] (18,5) circle (0.5);
  \draw (0,0) circle (0.5);
  \draw (2,0) circle (0.5);
  \draw (4,0) circle (0.5);
  \draw (9,0) circle (0.5);
  \draw (12,0) circle (0.5);
  \draw (16,0) circle (0.5);
  \draw (18,0) circle (0.5);
  \draw (20,0) circle (0.5);
  \draw (2,5) circle (0.5);
  \draw (10,5) circle (0.5);
  \draw (18,5) circle (0.5);    
  \draw (0,0) node{0};
  \draw (2,0) node{1};
  \draw (4,0) node{2};
  \draw (9,0) node{34};
  \draw (12,0) node{5};
  \draw (16,0) node{6};
  \draw (18,0) node{7};
  \draw (20,0) node{8};
  \draw (2,5) node{9};
  \draw (10,5) node{10};
  \draw (18,5) node{11};
\end{tikzpicture}  
\end{center}
\end{figure}
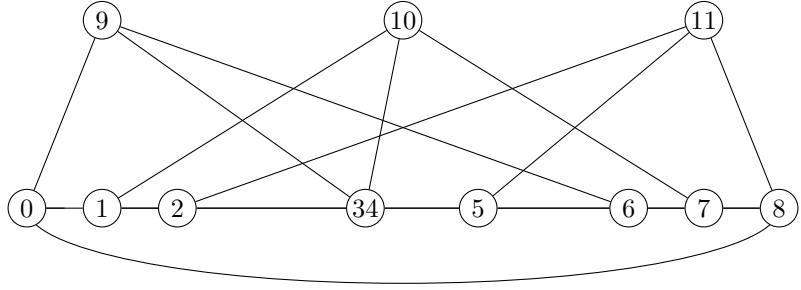

  $G$ is trivalent at every vertex except vertex 34, which is
  4-valent.

  Bayer and Eisenbud introduced a theory of graph curves in \cite{BE}. These are
  nodal curves for which each irreducible component is a
  rational curve. The graph in the name is the dual graph of the
  curve. (Note: in \cite{BE}, the definition of a graph curve specifies that the graph should be trivalent, but we will continue to call the objects we study
  graph curves even though there is one 4-valent vertex.)

The graph $G$ defines a 1-dimensional family of nodal curves  because
we can vary the cross-ratio of the four nodes on the component labeled
34. The graph $G$ has three specializations to trivalent graphs as
indicated in Figure \ref{TrivalentSpecializationsFigure}.

\begin{figure}[h]
  \begin{center}
    \caption{The graph $G$ and its trivalent specializations}
       \label{TrivalentSpecializationsFigure}    
    \begin{tikzpicture}[scale=0.35]
\begin{scope}[xshift=0cm,yshift=0cm]      
  \draw (4,0)--(0,-3); 
  \draw (4,0)--(7,-1); 
  \draw (4,0)--(5,-6); 
  \draw (0,-3)--(1,-8); 
  \draw (0,-3)--(3,-4); 
  \draw[very thick,red] (1,-8)--(4,-10.5); 
  \draw (1,-8)--(8,-7); 
  \draw[very thick,red] (4,-10.5)--(5,-6); 
  \draw[very thick,red] (4,-10.5)--(10,-9); 
  \draw[very thick,red] (4,-10.5)--(3,-4); 
  \draw (10,-9)--(11,-5); 
  \draw (10,-9)--(8,-7); 
  \draw (11,-5)--(9,-2); 
  \draw (11,-5)--(5,-6); 
  \draw (9,-2)--(7,-1); 
  \draw (9,-2)--(3,-4); 
  \draw (7,-1)--(8,-7); 
  \filldraw[white] (4,0) circle (0.5); 
  \filldraw[white] (0,-3) circle (0.5); 
  \filldraw[white] (1,-8) circle (0.5); 
  \filldraw[white] (4,-10.5) circle (0.5); 
  \filldraw[white] (10,-9) circle (0.5); 
  \filldraw[white] (11,-5) circle (0.5); 
  \filldraw[white] (9,-2) circle (0.5); 
  \filldraw[white] (7,-1) circle (0.5); 
  \filldraw[white] (5,-6) circle (0.5); 
  \filldraw[white] (3,-4) circle (0.5); 
  \filldraw[white] (8,-7) circle (0.5); 
  \draw (4,0) circle (0.5); 
  \draw (0,-3) circle (0.5); 
  \draw (1,-8) circle (0.5); 
  \draw[red] (4,-10.5) circle (0.5); 
  \draw (10,-9) circle (0.5); 
  \draw (11,-5) circle (0.5); 
  \draw (9,-2) circle (0.5); 
  \draw (7,-1) circle (0.5); 
  \draw (5,-6) circle (0.5); 
  \draw (3,-4) circle (0.5); 
  \draw (8,-7) circle (0.5); 
  \draw (4,0) node{0};
  \draw (0,-3) node{1};
  \draw (1,-8) node{2};
  \draw (4,-10.5) node[red]{34};
  \draw (10,-9) node{5};
  \draw (11,-5) node{6};
  \draw (9,-2) node{7};
  \draw (7,-1) node{8};
  \draw (5,-6) node{9};
  \draw (3,-4) node{10};
  \draw (8,-7) node{11}; 
\end{scope}

\begin{scope}[xshift=16cm,yshift=0cm]
  \draw (4,0)--(0,-3); 
  \draw (4,0)--(7,-1); 
  \draw (4,0)--(5,-6); 
  \draw (0,-3)--(1,-8); 
  \draw (0,-3)--(3,-4); 
  \draw[very thick,red] (1,-8)--(2,-11); 
  \draw (1,-8)--(8,-7); 
  \draw[very thick,red] (2,-11)--(6,-10); 
  \draw[very thick,red] (2,-11)--(5,-6); 
  \draw[very thick,red] (6,-10)--(10,-9); 
  \draw[very thick,red] (6,-10)--(3,-4); 
  \draw (10,-9)--(11,-5); 
  \draw (10,-9)--(8,-7); 
  \draw (11,-5)--(9,-2); 
  \draw (11,-5)--(5,-6); 
  \draw (9,-2)--(7,-1); 
  \draw (9,-2)--(3,-4); 
  \draw (7,-1)--(8,-7); 
  \filldraw[white] (4,0) circle (0.5); 
  \filldraw[white] (0,-3) circle (0.5); 
  \filldraw[white] (1,-8) circle (0.5); 
  \filldraw[white] (2,-11) circle (0.5); 
  \filldraw[white] (6,-10) circle (0.5); 
  \filldraw[white] (10,-9) circle (0.5); 
  \filldraw[white] (11,-5) circle (0.5); 
  \filldraw[white] (9,-2) circle (0.5); 
  \filldraw[white] (7,-1) circle (0.5); 
  \filldraw[white] (5,-6) circle (0.5); 
  \filldraw[white] (3,-4) circle (0.5); 
  \filldraw[white] (8,-7) circle (0.5); 
  \draw (4,0) circle (0.5); 
  \draw (0,-3) circle (0.5); 
  \draw (1,-8) circle (0.5); 
  \draw[red] (2,-11) circle (0.5); 
  \draw[red] (6,-10) circle (0.5); 
  \draw (10,-9) circle (0.5); 
  \draw (11,-5) circle (0.5); 
  \draw (9,-2) circle (0.5); 
  \draw (7,-1) circle (0.5); 
  \draw (5,-6) circle (0.5); 
  \draw (3,-4) circle (0.5); 
  \draw (8,-7) circle (0.5); 
  \draw (4,0) node{0};
  \draw (0,-3) node{1};
  \draw (1,-8) node{2};
  \draw (2,-11) node[red]{3};
  \draw (6,-10) node[red]{4};
  \draw (10,-9) node{5};
  \draw (11,-5) node{6};
  \draw (9,-2) node{7};
  \draw (7,-1) node{8};
  \draw (5,-6) node{9};
  \draw (3,-4) node{10};
  \draw (8,-7) node{11}; 
\end{scope}

\begin{scope}[xshift=0cm,yshift=-16cm]
  \draw (4,0)--(0,-3); 
  \draw (4,0)--(7,-1); 
  \draw (4,0)--(5,-6); 
  \draw (0,-3)--(1,-8); 
  \draw (0,-3)--(3,-4); 
  \draw[very thick,red] (1,-8)--(6,-10); 
  \draw (1,-8)--(8,-7); 
  \draw[very thick,red] (2,-11)--(6,-10); 
  \draw[very thick,red] (2,-11)--(5,-6); 
  \draw[very thick,red] (6,-10)--(10,-9); 
  \draw[very thick,red] (2,-11)--(3,-4); 
  \draw (10,-9)--(11,-5); 
  \draw (10,-9)--(8,-7); 
  \draw (11,-5)--(9,-2); 
  \draw (11,-5)--(5,-6); 
  \draw (9,-2)--(7,-1); 
  \draw (9,-2)--(3,-4); 
  \draw (7,-1)--(8,-7); 
  \filldraw[white] (4,0) circle (0.5); 
  \filldraw[white] (0,-3) circle (0.5); 
  \filldraw[white] (1,-8) circle (0.5); 
  \filldraw[white] (2,-11) circle (0.5); 
  \filldraw[white] (6,-10) circle (0.5); 
  \filldraw[white] (10,-9) circle (0.5); 
  \filldraw[white] (11,-5) circle (0.5); 
  \filldraw[white] (9,-2) circle (0.5); 
  \filldraw[white] (7,-1) circle (0.5); 
  \filldraw[white] (5,-6) circle (0.5); 
  \filldraw[white] (3,-4) circle (0.5); 
  \filldraw[white] (8,-7) circle (0.5); 
  \draw (4,0) circle (0.5); 
  \draw (0,-3) circle (0.5); 
  \draw (1,-8) circle (0.5); 
  \draw[red] (2,-11) circle (0.5); 
  \draw[red] (6,-10) circle (0.5); 
  \draw (10,-9) circle (0.5); 
  \draw (11,-5) circle (0.5); 
  \draw (9,-2) circle (0.5); 
  \draw (7,-1) circle (0.5); 
  \draw (5,-6) circle (0.5); 
  \draw (3,-4) circle (0.5); 
  \draw (8,-7) circle (0.5); 
  \draw (4,0) node{0};
  \draw (0,-3) node{1};
  \draw (1,-8) node{2};
  \draw (2,-11) node[red]{3};
  \draw (6,-10) node[red]{4};
  \draw (10,-9) node{5};
  \draw (11,-5) node{6};
  \draw (9,-2) node{7};
  \draw (7,-1) node{8};
  \draw (5,-6) node{9};
  \draw (3,-4) node{10};
  \draw (8,-7) node{11}; 
\end{scope}

\begin{scope}[xshift=16cm,yshift=-16cm]
 \draw (4,0)--(0,-3); 
  \draw (4,0)--(7,-1); 
  \draw (4,0)--(5,-6); 
  \draw (0,-3)--(1,-8); 
  \draw (0,-3)--(3,-4); 
  \draw[very thick,red] (1,-8)--(2,-11); 
  \draw (1,-8)--(8,-7); 
  \draw[very thick,red] (6,-10)--(2,-11); 
  \draw[very thick,red] (6,-10)--(5,-6); 
  \draw[very thick,red] (6,-10)--(10,-9); 
  \draw[very thick,red] (2,-11)--(3,-4); 
  \draw (10,-9)--(11,-5); 
  \draw (10,-9)--(8,-7); 
  \draw (11,-5)--(9,-2); 
  \draw (11,-5)--(5,-6); 
  \draw (9,-2)--(7,-1); 
  \draw (9,-2)--(3,-4); 
  \draw (7,-1)--(8,-7); 
  \filldraw[white] (4,0) circle (0.5); 
  \filldraw[white] (0,-3) circle (0.5); 
  \filldraw[white] (1,-8) circle (0.5); 
  \filldraw[white] (6,-10) circle (0.5); 
  \filldraw[white] (2,-11) circle (0.5); 
  \filldraw[white] (10,-9) circle (0.5); 
  \filldraw[white] (11,-5) circle (0.5); 
  \filldraw[white] (9,-2) circle (0.5); 
  \filldraw[white] (7,-1) circle (0.5); 
  \filldraw[white] (5,-6) circle (0.5); 
  \filldraw[white] (3,-4) circle (0.5); 
  \filldraw[white] (8,-7) circle (0.5); 
  \draw (4,0) circle (0.5); 
  \draw (0,-3) circle (0.5); 
  \draw (1,-8) circle (0.5); 
  \draw[red] (6,-10) circle (0.5); 
  \draw[red] (2,-11) circle (0.5); 
  \draw (10,-9) circle (0.5); 
  \draw (11,-5) circle (0.5); 
  \draw (9,-2) circle (0.5); 
  \draw (7,-1) circle (0.5); 
  \draw (5,-6) circle (0.5); 
  \draw (3,-4) circle (0.5); 
  \draw (8,-7) circle (0.5); 
  \draw (4,0) node{0};
  \draw (0,-3) node{1};
  \draw (1,-8) node{2};
  \draw (6,-10) node[red]{3};
  \draw (2,-11) node[red]{4};
  \draw (10,-9) node{5};
  \draw (11,-5) node{6};
  \draw (9,-2) node{7};
  \draw (7,-1) node{8};
  \draw (5,-6) node{9};
  \draw (3,-4) node{10};
  \draw (8,-7) node{11}; 
 \end{scope}

\draw (5,1.5) node{$G$};        
\draw (21,1.5) node{$G_0$};
\draw (5,-14.5) node{$G_{1}$};
\draw (21,-14.5) node{$G_{\infty}$};
\end{tikzpicture}
\end{center}
\end{figure}
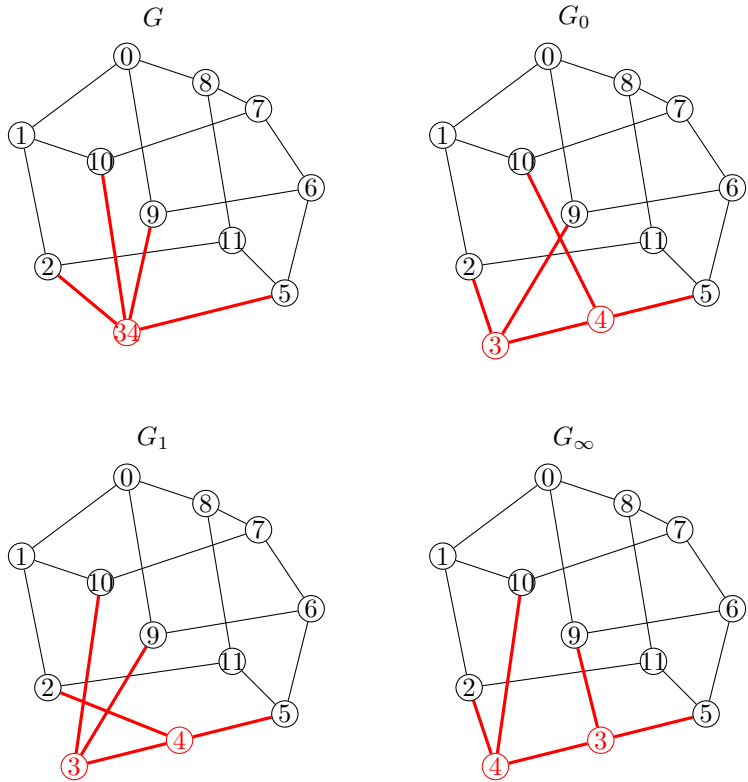

\subsection{How this family was selected}

This family was selected as follows.

We searched for genus 7 trivalent graph curves with pure
Betti tables. In \texttt{Sage}, we called a list of the genus 7
trivalent graphs; there are 85 such graphs. Next, we selected the
connected and 3-edge-connected graphs among this list, since by \cite{BE}*{Prop. 2.5},
these are the ones that give graph curves with very ample dualizing
sheaves. Next, we computed the Betti tables of these graph curves in
\texttt{Macaulay2} and found two genus 7 graph curves with pure Betti
tables. We selected the one that had the larger automorphism group for
further study; see \codelink{https://faculty.fordham.edu/dswinarski/MukaiModelOfM7/v1/5.1.htm}{5.1}. This is the graph $G_0$ in Figure \ref{K33FigureG0}. It has two types of edges: 
those that belong to the nonagon, and those that do not. We 
contracted one of the nonagon edges to obtain the graph $G$.

For every member of this family of curves, the
dualizing sheaf is very ample. Thus, each member of the family is
represented in the Hilbert scheme of canonical curves. Moreover, for a
general member of this family, and the specialization $G_0$ (but not
the specializations $G_1$
and $G_{\infty}$), the canonical ideal has a pure Betti table. This permits us to study
degenerations in the parameter space of Mukai's model as the curve
acquires extra syzygies.

The relevant combinatorial features of the graphs are that $G_1$ and
$G_\infty$ each contain 4-cycles, whereas in $G_0$, the shortest cycles
have length 5. Bayer and Eisenbud describe in \cite{BE}*{Section 5} how to
construct line bundles that lower the Clifford index and add to the
Betti table starting from cycles that
are sufficiently small relative to the genus of the graph.

\subsection{Canonical equations of these graph curves}

To produce equations for this family, we begin with the specialization
$G_{0}$. See Figure \ref{K33FigureG0}.  
\begin{figure}[h]
  \begin{center}
    \caption{The graph $G_0$}
        \label{K33FigureG0}    
\begin{tikzpicture}[scale=0.5]
  \draw (0,0)--(1,0); 
  \draw (0,0) arc (180:360:10 and 2);
  \draw (0,0)--(20,0); 
  \draw (0,0)--(2,5); 
  \draw (2,0)--(4,0); 
  \draw (2,0)--(10,5); 
  \draw (4,0) -- (8,0); 
  \draw (4,0)--(18,5); 
  \draw (8,0)--(10,0); 
  \draw (8,0)--(2,5); 
  \draw (10,0)--(12,0); 
  \draw (10,0)--(10,5); 
  \draw (12,0) -- (16,0); 
  \draw (12,0)--(18,5); 
  \draw (16,0)--(18,0); 
  \draw (16,0)--(2,5); 
  \draw (18,0)--(20,0); 
  \draw (18,0)--(10,5); 
  \draw (20,0)--(18,5); 
  \filldraw[white] (0,0) circle (0.5);
  \filldraw[white] (2,0) circle (0.5);
  \filldraw[white] (4,0) circle (0.5);
  \filldraw[white] (8,0) circle (0.5);
  \filldraw[white] (10,0) circle (0.5);
  \filldraw[white] (12,0) circle (0.5);
  \filldraw[white] (16,0) circle (0.5);
  \filldraw[white] (18,0) circle (0.5);
  \filldraw[white] (20,0) circle (0.5);
  \filldraw[white] (2,5) circle (0.5);
  \filldraw[white] (10,5) circle (0.5);
  \filldraw[white] (18,5) circle (0.5);
  \draw (0,0) circle (0.5);
  \draw (2,0) circle (0.5);
  \draw (4,0) circle (0.5);
  \draw (8,0) circle (0.5);
  \draw (10,0) circle (0.5);
  \draw (12,0) circle (0.5);
  \draw (16,0) circle (0.5);
  \draw (18,0) circle (0.5);
  \draw (20,0) circle (0.5);
  \draw (2,5) circle (0.5);
  \draw (10,5) circle (0.5);
  \draw (18,5) circle (0.5);    
  \draw (0,0) node{0};
  \draw (2,0) node{1};
  \draw (4,0) node{2};
  \draw (8,0) node{3};
  \draw (10,0) node{4};
  \draw (12,0) node{5};
  \draw (16,0) node{6};
  \draw (18,0) node{7};
  \draw (20,0) node{8};
  \draw (2,5) node{9};
  \draw (10,5) node{10};
  \draw (18,5) node{11};
\end{tikzpicture}  
\end{center}
\end{figure}

Let $\CNod{0}$ be the graph curve associated to the graph $G_0$. Since $G_0$ is
3-edge-connected, by \cite{BE}*{Prop.~2.5}, $\omega_{\CNod{0}}$ is very
ample. We can use \cite{BE}*{Prop.~3.1} to write the canonical ideal
of $\CNod{0}$; see \codelink{https://faculty.fordham.edu/dswinarski/MukaiModelOfM7/v1/5.2.htm}{5.2}. Let $y_0,\ldots,y_6$ represent the basis of $H^1(G_0)$
corresponding to the 5-cycles 0-1-2-3-9-0, 1-2-3-4-10-1, 2-3-4-5-11-2,
3-4-5-6-9-3, 4-5-6-7-10-4, 5-6-7-8-11-5, and 6-7-8-0-9-6. Then the
canonical ideal of $\CNod{0}$ in these variables is given by the following 10
quadrics. There are 5 monomials and 5 polynomials. 
\[
\begin{array}{c}
I(\CNod{0})  = \langle y_0 y_4, y_0 y_5, y_1 y_5, y_1 y_6, y_2 y_6,  
y_0 y_2-y_1 y_2+y_2 y_3-y_3 y_4+y_4 y_5-y_4 y_6,   \\
y_0 y_3-y_2 y_3+y_3^2-y_4 y_5+y_3 y_6+y_4 y_6, 
y_1 y_3-y_2 y_3+y_3 y_4-y_4 y_5+y_4 y_6, \\
y_2 y_4-y_3 y_4+y_4 y_5-y_4 y_6, 
  y_3 y_5-y_4 y_5+y_4 y_6 \rangle
\end{array}
\]

We can compute a primary decomposition of the ideal shown above to
obtain the ideal of each irreducible component of $\CNod{0}$. This
yields Table \ref{tab:Components of C0}.
\begin{table}
  \caption{$\CNod{0}^{v}$ for $v \in G_0$}
  \label{tab:Components of C0}
\begin{center}
  \begin{tabular}{cl}
Vertex & Ideal of component in $\CNod{0}$\\    
0 & $\langle y_5,y_4,y_3,y_2,y_1\rangle$ \\
1 & $\langle y_6,y_5,y_4,y_3,y_2\rangle$ \\
2 & $\langle y_6,y_5,y_4,y_3,y_0-y_1\rangle$ \\
3 & $\langle y_6,y_5,y_4,y_1-y_2,y_0-y_2+y_3\rangle$ \\
4 & $\langle y_6,y_5,y_2-y_3,y_1-y_3+y_4,y_0\rangle$ \\
5 & $\langle y_6,y_3-y_4,y_2-y_4+y_5,y_1,y_0\rangle$ \\
6 & $\langle y_4-y_5,y_3-y_5+y_6,y_2,y_1,y_0\rangle$ \\
7 & $\langle y_5-y_6,y_3,y_2,y_1,y_0\rangle$ \\
8 & $\langle y_4,y_3,y_2,y_1,y_0\rangle$ \\
9 & $\langle y_5,y_4,y_2,y_1,y_0+y_3+y_6\rangle$ \\
10 & $\langle y_6,y_5,y_3,y_2,y_0\rangle$ \\
11 & $\langle y_6,y_4,y_3,y_1,y_0\rangle$  
\end{tabular}                                    
\end{center}
\end{table}

Next, we find equations for the other members of this family by
replacing the components 3 and 4 by a quadric; see \codelink{https://faculty.fordham.edu/dswinarski/MukaiModelOfM7/v1/5.3.htm}{5.3}.
The union of components 3 and 4 in $\CNod{0}$ is contained in the plane
$\langle y_6,y_5,y_1-y_2+y_4,y_0-y_2+y_3\rangle$. The nodes corresponding to the
edges 2-3, 3-9, 4-5, and 4-10 occur at $[1:1:1:0:0:0:0]$,
$[-1:0:0:1:0:0:0]$, $[0:0:1:1:1:0:0]$ and $[0:-1:0:0:1:0:0]$. For all
$t = [t_0:t_1]$, the quadric $t_0 y_2 y_3-t_1 y_2 y_4+(-t_0+t_1) y_3 y_4$
in this plane passes through these four points. When $t_0=0$, the
quadric factors as $(y_2-y_3)y_4$, which corresponds to the graph
$G_0$. When $t_0=t_1$, the quadric factors as  $y_2(y_3-y_4)$, which corresponds
to the graph $G_{1}$.  When $t_1=0$, the quadric factors as $y_3(y_2-y_4)$, which
corresponds to the graph $G_{\infty}$. 

Now, for a general $t$, we intersect the ideals for components $0,1,2,5,6,7,8,9,10,11$ with the ideal
\[
\langle t_0 y_2 y_3-t_1 y_2 y_4+(-t_0+t_1) y_3 y_4, y_6,y_5,y_1-y_2+y_4,y_0-y_2+y_3\rangle
\]
defining the component 34 to obtain an ideal $I_t$ generated by the following ten quadrics.
\begin{align*}
  f_0 &= y_2 y_6\\
  f_1 &= y_1 y_6\\
  f_2 &= y_3 y_5-y_4 y_5+y_4 y_6\\
  f_3 &= y_1 y_5\\
  f_4 &= y_0 y_5 \\
  f_5 &= y_0 y_4-y_2 y_4+y_3 y_4-y_4 y_5+y_4 y_6\\
  f_6 &= t_0 y_2 y_3-t_1 y_2 y_4+(-t_0+t_1) y_3 y_4+(t_0-t_1) y_4 y_5+(-t_0+t_1) y_4 y_6\\
  f_7&= y_1 y_3-y_2 y_3+y_3 y_4-y_4 y_5+y_4 y_6\\
  f_8 &= y_0 y_3-y_2 y_3+y_3^2-y_4 y_5+y_3 y_6+y_4 y_6 \\
  f_9 &= y_0 y_2-y_1 y_2+y_2 y_3-y_2 y_4 
\end{align*}

We change to the following basis of $I_t$ so that $\sum_{i=0}^{4} g_i g_{i+5} = 0$; see \codelink{https://faculty.fordham.edu/dswinarski/MukaiModelOfM7/v1/5.4.htm}{5.4}.
\begin{align*}
g_0 &= -t_0 y_1 y_3+t_1 y_0 y_4\\
g_1 &= -(t_0-t_1) y_0 y_3-t_1 y_2 y_3-(t_0-t_1) y_3^2+t_1 y_2 y_4+(t_0-t_1) y_3y_4-(t_0-t_1) y_3 y_6 \\
g_2 &= (t_0-t_1) y_1 y_3+t_1 y_2 y_3-t_1 y_2 y_4 \\
g_3 &= t_0 y_2 y_3-t_1 y_2 y_4-(t_0-t_1) y_3 y_4+(t_0-t_1) y_3 y_5 \\
g_4 &= -t_1 y_0 y_2+t_1 y_1 y_2+-t_1 y_2 y_3+t_1 y_2 y_4\\
g_5 &= y_2 y_6 \\
g_6 &= y_1 y_5 \\
g_7 &= y_0 y_5+y_3 y_5-y_4 y_5+y_4 y_6 \\
g_8 &= y_1 y_6 \\
g_9 &= y_3 y_5-y_4 y_5+y_4 y_6
\end{align*}

For $t \in (\mathbb{P}^1 \setminus \{1,\infty\})$, the Betti table of
$I_t$ is pure. When $t \in \{1,\infty\}$, the Betti table is 
\begin{center}
\begin{verbatim}
                           0  1  2  3  4 5
                    total: 1 10 19 19 10 1
                        0: 1  .  .  .  . .
                        1: . 10 16  3  . .
                        2: .  .  3 16 10 .
                        3: .  .  .  .  . 1
\end{verbatim}
\end{center}
By \cite{Schreyer}, this is the Betti table of a tetragonal curve.

\newpage

\begin{table}
  \caption{$X_{t}^{v}$ for $v \in G$}
  \label{tab:components of Xt}
\begin{center}
  \begin{tabular}{lll}
Component & Ideal \\
0 & $\langle x_{1234}, x_{1235}, x_{1245}, x_{2345}, x_{45}, x_{35}, x_{25}, x_{24}, x_{23}, x_{15}, x_{14}, x_{13}, x_{12}, x_{0}\rangle$ \\
1 & $\langle x_{1234}, x_{1235}, x_{1245}, x_{1345}, x_{2345}, x_{45}, x_{35}, x_{25}, x_{23}, x_{15}, x_{14}, x_{13}, x_{12}, x_{0}\rangle$ \\
2 & $\langle x_{1234}, x_{1235}, x_{1245}, x_{1345}, x_{2345}, x_{45}, x_{35}, x_{25}, x_{24}-x_{34}, x_{23}, x_{15}, x_{14}, x_{12}-x_{13}, x_{0}\rangle$ \\ 
34 & $\langle x_{1234}, x_{1235}, x_{1245}, x_{1345}, x_{2345},
     x_{24}-x_{34}, x_{23}-x_{25}, x_{15}+x_{25}-x_{35}-x_{45},
     x_{14},$\\
          & $x_{13}+x_{25}+x_{34}, x_{12}+x_{25}+x_{34}, x_{25} x_{34}-x_{34} x_{35}+x_{25} x_{45}, t_1 x_{25}-t_1 x_{35}-t_1 x_{45}+x_{0}, $\\
          & $t_0 x_{25}-t_1 x_{35}-t_0 x_{45}, t_1 x_{34} x_{45}+t_1 x_{35} x_{45}+t_1 x_{45}^2-x_{0} x_{34}-x_{0} x_{45}, $\\
  &  $t_0 x_{34} x_{35}-t_1 x_{34} x_{35}-t_0 x_{34} x_{45}-t_1 x_{35} x_{45}-t_0 x_{45}^2, t_0 t_1 x_{35}-t_1^2 x_{35}-t_0 x_{0}\rangle$ \\
    5 & $\langle x_{1234}, x_{1245}, x_{1345}, x_{2345}, x_{45}, x_{34}, x_{24}, x_{23}-x_{25}, x_{15}+x_{25}-x_{35}, x_{14}, x_{12}-x_{13}, $\\
          & $t_1 x_{1235}-x_{13}-x_{25}, t_1 x_{25}-t_1 x_{35}+x_{0}, t_0 x_{25}-t_1 x_{35}, $\\
    & $t_0 x_{13} x_{35}-t_1 x_{13} x_{35}-t_0 x_{0} x_{1235}+x_{0} x_{35}, t_0 t_1 x_{35}-t_1^2 x_{35}-t_0 x_{0}\rangle$ \\ 
    6 & $\langle x_{1234}, x_{1245}, x_{2345}, x_{34}+x_{45}, x_{24}, x_{23}-x_{25}, x_{15}+x_{25}-x_{35}-x_{45}, x_{14}, x_{13}, x_{12}, $\\
          & $t_1 x_{1235}-x_{25}, t_1 x_{1345}+x_{45}, t_0 x_{1345}+t_0 x_{1235}-x_{35}, t_1 x_{25}-t_1 x_{35}-t_1 x_{45}+x_{0}, $\\
    & $t_0 x_{25}-t_1 x_{35}-t_0 x_{45}, t_0 t_1 x_{35}-t_1^2 x_{35}-t_0 x_{0}\rangle$ \\ 
7 & $\langle x_{1234}, x_{1245}, x_{1345}+x_{1235}, x_{2345}, x_{35}, x_{34}+x_{45}, x_{25}-x_{45}, x_{24}, x_{23}-x_{45}, x_{15}, x_{14}, x_{13}, x_{12}, x_{0}\rangle$ \\ 
8 & $\langle x_{1234}, x_{1245}, x_{2345}, x_{45}, x_{35}, x_{34}, x_{25}, x_{24}, x_{23}, x_{15}, x_{14}, x_{13}, x_{12}, x_{0}\rangle$ \\ 
    9 & $\langle x_{1234}, x_{1235}, x_{1245}, x_{2345}, x_{25}, x_{24}, x_{23}, x_{15}-x_{35}-x_{45}, x_{14}, x_{13}, x_{12}, t_1 x_{1345}-x_{34}, $\\
    & $t_0 x_{45}-t_1 x_{45}+x_{0}, x_{34} x_{35}+x_{34} x_{45}-x_{0} x_{1345}, t_1 x_{35}+t_1 x_{45}-x_{0}\rangle$ \\ 
10 & $\langle x_{1234}, x_{1235}, x_{1245}, x_{1345}, x_{2345}, x_{35}, x_{34}+x_{45}, x_{25}-x_{45}, x_{23}-x_{45}, x_{15}, x_{14}, x_{13}, x_{12}, x_{0}\rangle$ \\ 
11 & $\langle x_{1234}, x_{1245}, x_{1345}, x_{2345}, x_{45}, x_{35}, x_{34}, x_{25}, x_{24}, x_{23}, x_{15}, x_{14}, x_{12}-x_{13}, x_{0}\rangle$
\end{tabular}                                    
\end{center}
\end{table}

\begin{table}
  \caption{Nodes of $X_t$ for generic $t$}
  \label{tab:Nodes of Xt}
\begin{center}
\begin{tabular}{ll}
0 -- 1 & $[0 : 0 : 0 : 0 : 0 : 0 : 0 : 0 : 1 : 0 : 0 : 0 : 0 : 0 : 0 : 0]$ \\
1 -- 2 & $[0 : 0 : 0 : 0 : 0 : 0 : 1 : 0 : 1 : 0 : 0 : 0 : 0 : 0 : 0 : 0]$ \\
2-- 34 & $[0 : -1 : -1 : 0 : 0 : 0 : 1 : 0 : 1 : 0 : 0 : 0 : 0 : 0 : 0 : 0]$ \\
34 -- 5 & $[t_0 t_1-t_1^2 : -t_1 : -t_1 : 0 : t_0-t_1 : t_1 : 0 : t_1 : 0 : t_0 : 0 : 0 : 0 : 0 : 0 : 0]$ \\
5 -- 6 & $[t_0t_1-t_1^2 : 0 : 0 : 0 : t_0-t_1 : t_1 : 0 : t_1 : 0 : t_0 : 0 : 0 : 1 : 0 : 0 : 0]$ \\
6 -- 7 & $[0 : 0 : 0 : 0 : 0 : t_1 : 0 : t_1 : -t_1 : 0 : t_1 : 0 : 1 : 0 : -1 : 0]$ \\
0 -- 8 & $[0 : 0 : 0 : 0 : 0 : 0 : 0 : 0 : 0 : 0 : 0 : 0 : 0 : 0 : 1 : 0]$ \\
7 -- 8 & $[0 : 0 : 0 : 0 : 0 : 0 : 0 : 0 : 0 : 0 : 0 : 0 : 1 : 0 : -1 : 0]$ \\
0 -- 9 & $[0 : 0 : 0 : 0 : 0 : 0 : 0 : 0 : t_1 : 0 : 0 : 0 : 0 : 0 : 1 : 0]$ \\
34 -- 9 & $[-t_0 t_1+t_1^2 : 0 : 0 : 0 : -t_0+t_1 : 0 : 0 : 0 : 0 : -t_0 : t_1 : 0 : 0 : 0 : 0 : 0]$ \\
6 -- 9 & $[t_0 t_1-t_1^2 : 0 : 0 : 0 : t_0-t_1 : 0 : 0 : 0 : t_1 : t_0 : -t_1 : 0 : 0 : 0 : 1 : 0]$ \\
1 -- 10 & $[0 : 0 : 0 : 0 : 0 : 0 : 1 : 0 : 0 : 0 : 0 : 0 : 0 : 0 : 0 : 0]$ \\
34 -- 10 & $[0 : 0 : 0 : 0 : 0 : 1 : -1 : 1 : -1 : 0 : 1 : 0 : 0 : 0 : 0 : 0]$ \\
7 -- 10 & $[0 : 0 : 0 : 0 : 0 : 1 : 0 : 1 : -1 : 0 : 1 : 0 : 0 : 0 : 0 : 0]$ \\
2 -- 11 & $[0 : 1 : 1 : 0 : 0 : 0 : 0 : 0 : 0 : 0 : 0 : 0 : 0 : 0 : 0 : 0]$ \\
5 -- 11 & $[0 : t_1 : t_1 : 0 : 0 : 0 : 0 : 0 : 0 : 0 : 0 : 0 : 1 : 0 : 0 : 0]$ \\
8 -- 11 & $[0 : 0 : 0 : 0 : 0 : 0 : 0 : 0 : 0 : 0 : 0 : 0 : 1 : 0 : 0 : 0]$
\end{tabular}
\end{center}
\end{table}

\newpage

\subsection{Spinor embeddings of each component of $\CNod{t}$}

Next, for $t \not\in \{0,1,\infty\}$, we embed each irreducible component of $\CNod{t}$ in
$\mathbb{P}(S^{+})$ and define
\begin{align*}
    X_{t}^{v} &:= \rho(\CNod{t}^v) \text{ for each } v \in G \\
    X_t &:= \bigcup_{v \in G} X_t^{v}
\end{align*}

We compute each component $X_{t}^{v}$ as
follows. First, we parametrize $\CNod{t}^v$. Next, we compute the
spinor associated to $W_{p}^{\perp}$, where $p$ is the general point
given by the parametrization of $\CNod{t}^v$. This gives us a parametrization of the line $X_{t}^{v}$  in $\mathbb{P}(S^{+})$. We then eliminate parameters to obtain
the ideal of $X_{t}^{v}$ in $\mathbb{P}(S^{+})$. This yields Table
\ref{tab:components of Xt}; see
\codelink{https://faculty.fordham.edu/dswinarski/MukaiModelOfM7/v1/5.5.htm}{5.5}. (Note:
the generators shown are not necessarily a Gr\"{o}bner basis in each
case.)

For $t \not\in \{0,1,\infty\}$, these components intersect at the
nodes listed in Table \ref{tab:Nodes of Xt}.

Next, we compute the ideal of $X_t = \bigcup_{v \in G} X_t^{v}$; see \codelink{https://faculty.fordham.edu/dswinarski/MukaiModelOfM7/v1/5.6.htm}{5.6}.  (Here we show a minimal set of generators; a Gr\"{o}bner basis is used for the limit computations.)

\begin{equation}
  \begin{array}{c}
   I( X_t) = \langle  x_{2345}, x_{1245}, x_{1234}, t_0 x_{25}-t_1 x_{35}-t_0 x_{45}, x_{23}-x_{25}, x_{15}+x_{25}-x_{35}-x_{45}, x_{14}, \\
    x_{12}-x_{13}, x_0+t_1 x_{25}-t_1 x_{35}-t_1 x_{45}, x_{24}
    x_{1345}, x_{13} x_{1345}, x_{45} x_{1235} + x_{25} x_{1345},    x_{34} x_{1235}-x_{25} x_{1345}, \\
    x_{24} x_{1345}, x_{13} x_{45}+x_{25} x_{45}+x_{34} x_{45}+t_1 x_{25} x_{1345}-t_1 x_{45} x_{1345}, \\
    x_{24} x_{35}-x_{34} x_{35}+t_1 x_{35} x_{1345},\\
    x_{13} x_{35}+x_{25} x_{35}+x_{34} x_{35}-t_1 x_{35} x_{1235}-t_1
    x_{35} x_{1345}, \\ 
    x_{25} x_{34}-x_{34} x_{35}+x_{25} x_{45}+t_1 x_{35} x_{1345}, \\
    x_{24} x_{25}-x_{34} x_{35}-x_{24} x_{45}+x_{25} x_{45}+x_{34}
    x_{45}+t_1 x_{25} x_{1345}+t_1 x_{35}x_{1345} -t_1 x_{45} x_{1345}, \\
    x_{13} x_{25}+x_{25}^2+x_{34} x_{35}-x_{25} x_{45}-t_1 x_{25}
    x_{1235}-t_1 x_{25} x_{1345}- t_1 x_{35} x_{1345},\\
    x_{13} x_{24}-x_{13} x_{34}, x_{35}^2 x_{1235}x_{1345} -t_0 x_{35} x_{1235}^2 x_{1345} -t_0 x_{35} x_{1235} x_{1345}^2 \rangle
  \end{array}
\end{equation}

We define $X_0$, $X_1$, and $X_{\infty}$ as the flat limits of the
family $X_t$ as $t$ approaches $0$, $1$, and $\infty$.

The ideal $I(X_t)$ contains nine generators in degree 1. We use them
to define families $\PNod{t} \subset \Gr(7,16)$ and $Y_t \subset \mathbb{P}^{15}$.

\begin{align}
\PNod{t} &:= \langle  x_{2345}, x_{1245}, x_{1234}, t_0 x_{25}-t_1x_{35}-t_0 x_{45}, x_{23}-x_{25},x_{15}+x_{25}-x_{35}-x_{45}, x_{14}, \\
\nonumber        & \qquad \qquad      x_{12}-x_{13}, x_0+t_1 x_{25}-t_1 x_{35}-t_1 x_{45} \rangle \\
Y_t &:= \PNod{t} \cap \OG
\end{align}

We establish the following propositions via explicit calculations in \texttt{Macaulay2}.

\begin{proposition} \label{prop:PNodt}
For $t \not\in \{1,\infty\}$, the map  $\rho:
\CNod{t}^{\operatorname{sm}} \rightarrow \mathbb{P}^{15}$ extends to an embedding, and
  \[
  \rho(\CNod{t}) = \PNod{t}  \cap \OG
\]
where
\begin{equation} \label{MNodt}
\MNod{t}= \left[\begin{array}{rrrrrrrrrrrrrrrr}
0 & 1 & 1 & 0 & 0 & 0 & 0 & 0 & 0 & 0 & 0 & 0 & 0 & 0 & 0 & 0\\
0 & 0 & 0 & 0 & 0 & 0 & 1 & 0 & 0 & 0 & 0 & 0 & 0 & 0 & 0 & 0\\
0 & 0 & 0 & 0 & 0 & 0 & 0 & 0 & 1 & 0 & 0 & 0 & 0 & 0 & 0 & 0\\
t_0 t_1-t_1^2 & 0 & 0 & 0 & t_0-t_1 & t_1 & 0 & t_1 & 0 & t_0 & 0 & 0 & 0 & 0 & 0 & 0\\
0 & 0 & 0 & 0 & 0 & 1 & 0 & 1 & 0 & 0 & 1 & 0 & 0 & 0 & 0 & 0\\
0 & 0 & 0 & 0 & 0 & 0 & 0 & 0 & 0 & 0 & 0 & 0 & 1 & 0 & 0 & 0\\
0 & 0 & 0 & 0 & 0 & 0 & 0 & 0 & 0 & 0 & 0 & 0 & 0 & 0 & 1 & 0
\end{array} \right]
\end{equation}
and $\PNod{t} = \mathbb{P}(\operatorname{RowSpace} \MNod{t})$
\end{proposition}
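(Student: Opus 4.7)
The plan is to follow the template of Propositions \ref{prop:PCusp} and \ref{prop:PRib}, exploiting the component-by-component analysis already recorded in Tables \ref{tab:components of Xt} and \ref{tab:Nodes of Xt}. For $t\notin\{0,1,\infty\}$, three assertions need checking: (i) the rational map $\rho$ extends to a morphism $\CNod{t}\to\Pro^{15}$; (ii) this morphism is a closed embedding; (iii) its image coincides with $\PNod{t}\cap\OG$. The case $t=0$ is handled separately using the explicit equations of $\CNod{0}$ given earlier in Section \ref{sec:graph curves}.

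For (i) and (ii), I would argue as follows. On the smooth locus of each component $\CNod{t}^{v}\cong\Pro^1$, substituting the parametrization from Section \ref{sec:graph curves} into the spinor formula of Section \ref{sec:background} yields the line $X_{t}^{v}$ in Table \ref{tab:components of Xt}; each of these parametrizations is of degree one, so each component maps isomorphically onto its image line. As $p$ approaches a node from either adjacent component, the two one-sided limits of $[s_{W_{p}^{\perp}}]$ coincide with the point listed in Table \ref{tab:Nodes of Xt}, so the component maps glue to a morphism $\rho\colon\CNod{t}\to X_t\subset\Pro^{15}$. The seventeen nodal images in Table \ref{tab:Nodes of Xt} are pairwise distinct for $t\notin\{0,1,\infty\}$, so $\rho$ is injective on points, and at each node the two incident lines of $X_t$ meet transversally, producing a genuine node. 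These facts combine to show $\rho$ is a closed immersion.

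For (iii), the ideal $I(X_t)$ computed at code link 5.6 contains exactly nine linear forms, and a direct linear-algebra check confirms that these nine forms span the annihilator of $\RowSpace(\MNod{t})$; hence their common zero locus is $\PNod{t}$, so $X_t\subset\PNod{t}$. Since every point of $X_t$ is a half-spinor of a Lagrangian, $X_t\subset\OG$ as well, yielding $X_t\subset\PNod{t}\cap\OG$. For the reverse inclusion I would add the nine linear generators to Mukai's ten defining quadrics of $\OG$ and compare in \texttt{Macaulay2} over $\mathbb{Q}(t_0,t_1)$; both schemes give the Hilbert polynomial of a nodal union of twelve lines of arithmetic genus seven, and a closed immersion of schemes with equal Hilbert polynomials is an isomorphism. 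The main obstacle is this symbolic work over the function field $\mathbb{Q}(t_0,t_1)$: intersecting and saturating twelve component ideals in sixteen variables depending on the parameter $t$ is the computational bottleneck, though in principle it reduces to Gr\"obner basis computations and is already recorded at the cited code links.
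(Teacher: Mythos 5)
Your overall strategy matches the paper's: the proof in the source is a pure citation to the computations at Code links 5.7 (for $t\notin\{0,1,\infty\}$) and 5.8 (for $t=0$), and your three-step outline (extend $\rho$, show it is a closed immersion, confirm $\rho(\CNod{t})=\PNod{t}\cap\OG$ by \texttt{Macaulay2}) is precisely the logical scaffolding behind that verification, including the separate treatment of $t=0$. So there is no difference in approach.

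There are, however, two factual slips worth flagging. First, $\CNod{t}$ is \emph{not} a union of twelve lines. The graph $G$ has eleven vertices, so $\CNod{t}$ has eleven irreducible components; the vertex $34$ is $4$-valent, and under the canonical and spinor embeddings that component is a \emph{conic}, not a line (this is visible in Table \ref{tab:components of Xt}, where the ideal of component $34$ cuts out a plane with a quadric on it). The correct statement is that $X_t$ is ten lines and one conic, total degree $12$, and with $17$ nodes the genus formula gives $p_a = 17 - 11 + 1 = 7$, consistent with the Hilbert polynomial $12t-6$; with twelve lines and genus $7$ one would need $18$ nodes, which is not what the graph has. Second, your sentence ``each of these parametrizations is of degree one, so each component maps isomorphically onto its image line'' is therefore also off for the component $34$: its parametrization has degree two onto the conic (though the map is still an isomorphism of $\Pro^1$ onto the image). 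These errors do not invalidate the argument --- the Hilbert-polynomial comparison and the inclusion-plus-saturation reasoning for (iii) go through once the correct component structure is used --- but they would need to be corrected before the outline could be turned into a proof.

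One smaller caution on step (ii): injectivity on closed points plus transversality of the two branches at each node is not by itself enough for a closed immersion; you should also confirm that no three components meet at a common point and that $\rho$ is unramified on the smooth locus. Both facts are implicit in the tables of components and nodes, but they should be stated explicitly if you want a self-contained argument rather than deferring to the computation.
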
  
\begin{proof}
See
\codelink{https://faculty.fordham.edu/dswinarski/MukaiModelOfM7/v1/5.7.htm}{5.7}
for the case $t \not\in
\{0,1,\infty\}$, and
\codelink{https://faculty.fordham.edu/dswinarski/MukaiModelOfM7/v1/5.8.htm}{5.8}
for the case $t = 0$.
\end{proof}

\subsection{The limits of this family as $t \rightarrow 1,\infty$}

The next two propositions describe the limits of the families $X_t$ and
$Y_t$ as $t$ approaches $1$ or $\infty$.

First, we describe the flat limits of $  X_t  $ in $\mathbb{P}(S^{+})$
as $t$ approaches $1$ or $\infty$, that is, degenerations of this
family of curves in the Hilbert scheme $\Hilb(\Pro^{15},12t-6)$. 

\begin{proposition}
  \begin{enumerate}
  \item $X_1$ is the union of the limits of the
    irreducible components in $X_t$ as $t \rightarrow 1$. It is a  graph curve whose dual graph is $G_1$. However, $\PNod{1} \cap \OG \neq X_1$.
  \item $X_{\infty}$ is the union of the limits of the
    irreducible components in $X_t$ as $t \rightarrow \infty$. It is a reducible curve that has nodes and spatial triple points as its singularities. Furthermore, $\PNod{\infty}  \cap \OG \neq X_{\infty}$.
\end{enumerate}
\end{proposition}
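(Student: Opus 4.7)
The plan is to carry out explicit \texttt{Macaulay2} computations using the component-by-component descriptions in Table~\ref{tab:components of Xt}, and then compare the resulting flat limits with the linear sections $\PNod{1}\cap\OG$ and $\PNod{\infty}\cap\OG$.

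First, for each $v\in G$, I specialize the generators of $I(X_t^v)$ at $t=1$ (by setting $t_0=t_1=1$) and at $t=\infty$ (by setting $t_0=1,\,t_1=0$). For every $v\neq 34$ the generators listed in Table~\ref{tab:components of Xt} are either independent of $(t_0,t_1)$ or specialize directly to a line, so $X_1^v$ (respectively $X_\infty^v$) coincides with the generic fiber and remains a line in $\mathbb{P}^{15}$. For $v=34$ the five Pl\"ucker coordinates $x_{1234},x_{1235},x_{1245},x_{1345},x_{2345}$ vanish in the limit and, after substituting the surviving linear equations (for instance $x_0=0$ together with $x_{25}=x_{35}+x_{45}$ at $t=1$, or $x_0=0$ together with $x_{25}=x_{45}$ at $t=\infty$), the remaining quadratic generator factors into two linear forms. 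Each factor, combined with the linear generators, cuts out a line in $\mathbb{P}^{15}$, yielding two components $X_1^3,X_1^4$ (respectively $X_\infty^3,X_\infty^4$) that together replace $X_t^{34}$ in the limit.

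Next I assemble $X_1$ (respectively $X_\infty$) as the union of the resulting twelve lines and compute pairwise incidences via ideal sums and saturations. For $t=1$ I verify that all pairwise intersections are reduced points in bijection with the edge set of $G_1$ and that no three components share a common point; this confirms the graph curve structure. For $t=\infty$ the analogous calculation should expose triples of components whose pairwise intersections collapse to a single point and whose tangent directions span a three-dimensional subspace rather than lying in a plane; these are the spatial triple points claimed in the statement.

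Finally, to prove $\PNod{1}\cap\OG\neq X_1$ and $\PNod{\infty}\cap\OG\neq X_\infty$, I specialize $\MNod{t}$ in (\ref{MNodt}) at each value, form the ideal of $\PNod{t}\cap\OG$ using Mukai's equations recalled in Section~\ref{sec:background}, and compute a primary decomposition. Since the flat limits $X_1$ and $X_\infty$ are pure-dimensional of arithmetic genus $7$, any embedded prime, residual line, or isolated point appearing in $\PNod{1}\cap\OG$ but not in $X_1$ (respectively in $\PNod{\infty}\cap\OG$ but not in $X_\infty$) witnesses the inequality; in practice, computing the ideal quotient $\bigl(I(\PNod{1}\cap\OG) : I(X_1)\bigr)$ and exhibiting a non-unit generator isolates the discrepancy directly. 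The main obstacle will be computational rather than conceptual: the Gr\"obner basis calculations in sixteen variables are heavy, and care is needed because $\PNod{1}$ and $\PNod{\infty}$ lie on special loci where Mukai's quadrics degenerate and extraneous components of the linear section are expected on a priori grounds.
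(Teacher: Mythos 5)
Your plan is essentially the paper's (specialize the generic ideals from Table~\ref{tab:components of Xt}, assemble the limiting curves, compare with $\PNod{t}\cap\OG$ by direct Gr\"obner/primary-decomposition calculations). But there is a substantive error in your treatment of the $t=\infty$ limit that, if carried out, would produce the wrong answer.

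You assert that at $t=\infty$, just as at $t=1$, the quadratic generator of $X_t^{34}$ factors into two linear forms, replacing the component $34$ by two lines. That is false. At $t=1$ (set $t_0=t_1=1$), the surviving linear relations force $x_0=0$ and $x_{25}=x_{35}+x_{45}$, and the quadric in the residual $\mathbb{P}^2$ becomes $x_{45}(x_{34}+x_{35}+x_{45})$, which indeed splits into two lines; so $X_1$ is a graph curve with dual graph $G_1$. But at $t=\infty$ (set $t_0=1$, $t_1=0$), the surviving linear relations give $x_0=0$ and $x_{25}=x_{45}$, and the residual quadric is $x_{34}x_{35}-x_{34}x_{45}-x_{45}^2$, whose symmetric matrix in the coordinates $x_{34},x_{35},x_{45}$ has full rank $3$. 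It is therefore an irreducible plane conic, not a union of two lines — as the paper explicitly remarks ("the component defined by vertex 34 does not split into two lines, at least over $\mathbb{Q}$"). This is precisely why the proposition describes $X_\infty$ only as "a reducible curve that has nodes and spatial triple points" and not as the graph curve of $G_\infty$. The dual graph $G_\infty$ being trivalent does not force the spinor-embedded limit to split; the embedding $\rho$ behaves differently at $t=\infty$ than at $t=1$.

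Your picture of the spatial triple points is also a bit off: they arise not from three lines whose pairwise intersections "collapse to a single point" in a pairwise fashion, but from specific nodes of $X_t$ colliding as $t\to\infty$ (node 6--7 with 7--8, node 5--11 with 8--11, node 0--9 with 0--8), and one must compute tangent cones at the limit points to see that the three branches span a $3$-dimensional tangent space rather than being coplanar.

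Your strategy for the final claims ($\PNod{1}\cap\OG\neq X_1$ and $\PNod{\infty}\cap\OG\neq X_\infty$) via ideal quotients or primary decomposition is fine in principle, but note that $X_1$ and $X_\infty$ need not be pure curves of arithmetic genus $7$ with any particular structure a priori — they are simply the flat Hilbert-scheme limits — so the justification "since the flat limits are pure-dimensional of arithmetic genus $7$" should be replaced by the direct observation that comparing a (degree, dimension, or primary decomposition) invariant of $\PNod{t}\cap\OG$ with that of $X_t$ at $t=1,\infty$ exhibits a discrepancy; indeed Proposition~\ref{prop:Grassmannian limits} in the paper shows that $Y_1$ and $Y_\infty$ are in fact $2$-dimensional, which is a much more dramatic failure than an embedded point.
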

\begin{proof} See
  \codelink{https://faculty.fordham.edu/dswinarski/MukaiModelOfM7/v1/5.9.htm}{5.9}
  for the case $t=1$
  and
\codelink{https://faculty.fordham.edu/dswinarski/MukaiModelOfM7/v1/5.10.htm}{5.10}
for the case $t=\infty$.
\end{proof}

Here are a few more details about the curve $X_{\infty}$. In
this limit, the component defined by vertex 34 does not
split into two lines, at least over $\mathbb{Q}$. Furthermore, some of the nodes in $X_t$ collide as $t \rightarrow \infty$. Specifically, node 6--7 approaches node 7--8; node 5--11 approaches node 8--11; and node 0--9 approaches node 0--8. By computing the tangent cones at these points, we can check that these singularities are spatial triple points.

Next, we describe the limits of the family $Y_t = \PNod{t} \cap \OG$ in $\Gr(7,16)$ as $t$ approaches $1$ or $\infty$.

\begin{proposition} \label{prop:Grassmannian limits} \mbox{} \\
  \begin{enumerate}
\item $Y_1$ is a union of five irreducible components, each of dimension 2.
\begin{itemize}
\item Lines 0 and 9 in the flat limit $X_1$ are replaced by their span 
\item Lines 1, 10, and 3 in the flat limit $X_1$ are replaced by the
  scroll connecting a point $p$ on line 1 to its image on line 3 under
  the isomorphism mapping nodes 0-1, 1-2, and 1-10 to 3-9, 3-4, and 3-10.
\item Lines 2 and 4 in the flat limit $X_1$ are replaced by their span
\item Lines 5 and 11 in the flat limit $X_1$ are replaced by their span
\item Lines 6, 7, 8 in the flat limit $X_1$ are replaced by the
  scroll connecting a point $p$ on line 6 to its image on line 8 under
  the isomorphism mapping nodes 5-6, 6-7, and 6-9 to 8-11, 7-8, and 0-8.
\end{itemize}

\item $Y_{\infty} $ is a union of eight irreducible components. 
\begin{itemize}
\item Lines 0, 1, 2, 7, 10, 11 in the flat limit $X_\infty$ appear as
  irreducible components of $Y_\infty$. 
\item Component 34 (an irreducible quadric) in the flat limit
  $X_\infty$ is an irreducible component of $Y_\infty$. 
\item Lines 5, 6, 8, 9 in the flat limit $X_\infty$ are replaced by their span, a $\mathbb{P}^2$
\end{itemize}

\end{enumerate}
\end{proposition}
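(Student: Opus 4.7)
The plan is to compute both limits explicitly in \texttt{Macaulay2} by realizing $Y_t = \PNod{t} \cap \OG$ as a flat family over a neighborhood of the degeneration point and then extracting the scheme-theoretic fiber at $t=1$ and $t=\infty$. Because $\PNod{t}$ is given by the nine explicit linear forms in the definition, the ideal of $Y_t$ inside $\Pro^{15} \times \Pro^1_{[t_0:t_1]}$ is obtained by adjoining these forms to Mukai's ten quadrics for $\OG$, and the flat limit is computed as a saturation: one adjoins the uniformizer ($t_1 - t_0$ for the limit $t\to 1$, or $t_0$ for $t \to \infty$) and saturates the ideal with respect to it before setting the parameter to zero. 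This is a standard Gr\"obner basis computation, already referenced in the paper via the code links.

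First I would carry out the saturation, obtain the ideal $I(Y_1)$ (resp.\ $I(Y_\infty)$), and extract a primary decomposition. For $Y_1$, I expect five primes, each of Krull dimension $3$ in the homogeneous coordinate ring (so projective dimension $2$). For $Y_\infty$, I expect eight primes, of which six cut out lines (the components labeled $0,1,2,7,10,11$ in the notation of Table \ref{tab:components of Xt}), one cuts out the irreducible quadric surface $X_\infty^{34}$, and one cuts out a $\Pro^2$. In each case one would check linearity/degree of each component directly from its ideal.

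Next, for each 2-dimensional component I would match the scheme-theoretic description with the geometric one asserted in the statement. For a pair of lines $L_u, L_v$ in $X_t$ meeting at a node $n$, the claim ``replaced by their span'' means that in the limit, $Y_{1}$ (or $Y_\infty$) contains the unique $\Pro^2$ through $L_u \cup L_v$; one verifies this by computing the ideal of $\Span(L_u, L_v)$ from the components of $X_t$ in Table \ref{tab:components of Xt}, specializing $t$ to the relevant value, and comparing to the corresponding primary component output by the decomposition. For the two scroll components in $Y_1$, one identifies the isomorphism between a pair of lines (say line 1 and line 3) that identifies the three nodes, parametrizes the ruled surface by $[\lambda \, p + \mu \, \varphi(p)]$ for $p$ on line 1, eliminates parameters, and compares the resulting ideal to the relevant primary component. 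That these surfaces are scrolls (not just ruled surfaces) follows from identifying them as $\mathbb{F}_0 = \Pro^1 \times \Pro^1$ via the three-point isomorphism.

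The main obstacle is not the flat-limit calculation itself (which is mechanical) but the geometric identification of the five/eight components. In particular, verifying the scroll description for the components replacing $(1,10,3)$ and $(6,7,8)$ in $Y_1$ requires pinning down the specific isomorphism between the two outer lines and checking that the node on the middle line in $X_1$ is consistent with the scroll structure; and in $Y_\infty$, one must carefully distinguish the $\Pro^2$ spanning lines $5,6,8,9$ from the other components and confirm it really is a single $\Pro^2$ (rather than, say, a reducible surface). All of these checks reduce to comparing ideals produced by two independent computations, so the argument is completed by pointing to explicit \texttt{Macaulay2} transcripts analogous to those provided for Propositions \ref{prop:PCusp}, \ref{prop:PRib}, and \ref{prop:PNodt}.
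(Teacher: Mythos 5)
Your plan computes the wrong scheme. You propose to ``realize $Y_t = \PNod{t} \cap \OG$ as a flat family \dots and then extract the scheme-theoretic fiber \dots the flat limit is computed as a saturation.'' But saturation with respect to the uniformizer followed by setting the parameter to zero is exactly the computation of the \emph{flat limit} of the family $Y_t$ inside the Hilbert scheme of $\Pro^{15}$. Since $Y_t = X_t$ for $t \not\in\{0,1,\infty\}$, that flat limit is $X_1$ (resp.\ $X_\infty$), which is one-dimensional and is the subject of the preceding proposition --- where the paper explicitly records that $\PNod{1}\cap\OG \neq X_1$. The object $Y_1$ in the present proposition is the naive specialization $\PNod{1}\cap\OG$, not a flat limit: the linear space $\PNod{t}\in\Gr(7,16)$ is first specialized at $t=1$, and the intersection with $\OG$ is then taken at that fixed linear space. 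The dimension count alone shows these differ: a flat limit of a family of curves stays one-dimensional, whereas the proposition asserts that every component of $Y_1$ is two-dimensional.

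The fix is simpler than what you describe, and is what the paper's code does: substitute $t_0 = t_1$ (resp.\ $t_1 = 0$) directly into the nine linear forms defining $\PNod{t}$, adjoin Mukai's ten quadrics cutting out $\OG$, and run a primary decomposition --- no saturation step. The rest of your outline (primary decomposition, matching each primary component against the span of a pair of lines or against a scroll obtained by parametrizing $[\lambda p + \mu\,\varphi(p)]$ and eliminating) is appropriate once you are decomposing the correct ideal, and it matches the computational flavor of the paper's proof, which simply cites the relevant \texttt{Macaulay2} transcripts. But as written, your first step would reproduce $X_1$ and $X_\infty$ rather than $Y_1$ and $Y_\infty$, and the subsequent geometric matching would fail (you would find one-dimensional components, not surfaces).
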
      
\begin{proof} See
  \codelink{https://faculty.fordham.edu/dswinarski/MukaiModelOfM7/v1/5.11.htm}{5.11}
  for the case $t=1$ and
\codelink{https://faculty.fordham.edu/dswinarski/MukaiModelOfM7/v1/5.12.htm}{5.12}
for the case $t=\infty$.
\end{proof}


\subsection{GIT instability for the limits as $t \rightarrow 1,\infty$}

In this section we discuss GIT semistability/instability for the
family $[\PNod{t}]$ with respect to the maximal
torus $T \subset\Spin(10)$ given by the lifts of the diagonal maximal
torus in $\SO(10)$. (Recall: we are working with the quadratic form
$\sum q_iq_{i+n}$, so there is a maximal torus consisting of diagonal matrices.)

\begin{proposition}  \label{prop:GIT instability}
$[\PNod{t}]$ is $T$-semistable with respect to
the lift of the diagonal maximal torus $T$ in $\SO(10)$ if and only $t
\not\in \{1,\infty\}$. 
\end{proposition}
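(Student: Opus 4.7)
The plan is to apply the Hilbert--Mumford numerical criterion via the Pl\"{u}cker embedding $\Gr(7,16) \hookrightarrow \mathbb{P}(\mywedge^7 S^{+})$. With $L_1,\dots,L_5$ a basis of characters of $T$, the spinor basis vector $x_I$ (for $I\subset\{1,\dots,5\}$ of even cardinality) has weight $\tfrac{1}{2}\sum_{i=1}^5 \epsilon^I_i L_i$, where $\epsilon^I_i=+1$ if $i\in I$ and $-1$ otherwise. The Pl\"{u}cker coordinate indexed by a 7-element subset of the columns of $\MNod{t}$ has $T$-weight equal to the sum of the seven corresponding spinor weights, and is nonzero precisely when the associated $7\times 7$ minor of $\MNod{t}$ does not vanish. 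Semistability of $[\PNod{t}]$ is therefore equivalent to the origin lying in the convex hull of these nonzero weights.

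The first step is to enumerate the nonvanishing $7\times 7$ minors of $\MNod{t}$. Rows 2, 3, 6, and 7 each have a unique nonzero entry, forcing the columns $x_{24}, x_{34}, x_{1235}, x_{1345}$ into every nonzero minor. Row 1 selects exactly one of $x_{12}$ or $x_{13}$, and the remaining two columns must yield a nonzero $2\times 2$ subdeterminant on rows 4--5, supported on $\{x_0, x_{15}, x_{23}, x_{25}, x_{35}, x_{45}\}$. Direct computation identifies each such $2\times 2$ subdeterminant as, up to sign, one of $t_1$, $t_0$, $t_0-t_1$, or $t_1(t_0-t_1)$, determining the vanishing pattern at each special value of $t$. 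The decisive observation is that the coefficient of $x_0$ in row 4 is $t_0 t_1 - t_1^2 = t_1(t_0-t_1)$, so every minor involving $x_0$ vanishes precisely at $t\in\{1,\infty\}$.

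For the instability direction, note that $x_0$ is the only column of $\MNod{t}$ whose spinor index $I$ satisfies $|I|=0$, while all other columns of $\MNod{t}$ satisfy $|I|\in\{2,4\}$. If $x_0$ is not chosen, the seven selected columns have total $|I_1|+\cdots+|I_7|=18$; if $x_0$ is chosen, the total is $16$. At $t\in\{1,\infty\}$ every nonzero minor avoids $x_0$, so every Pl\"{u}cker weight satisfies $\sum|I_k|=18$, which in turn implies the five coordinates of each Pl\"{u}cker weight sum to the same constant $\tfrac{1}{2}$. Hence the cocharacter dual to $L_1+\cdots+L_5$ pairs strictly positively with every nonzero weight, witnessing $T$-instability.

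For the semistability direction when $t\notin\{1,\infty\}$, I exhibit an explicit nonnegative combination of weights summing to zero. Adjoining to the universally chosen set $F := \{x_{24}, x_{34}, x_{1235}, x_{1345}\}$ the four triples
\[
\{x_{12}, x_0, x_{25}\},\quad \{x_{12}, x_{15}, x_{23}\},\quad \{x_{12}, x_{15}, x_{45}\},\quad \{x_{13}, x_{25}, x_{45}\}
\]
produces four 7-column sets whose minors are, up to sign, $t_1(t_0-t_1)$, $t_0-t_1$, $t_0-t_1$, and $t_1$, simultaneously nonzero precisely when $t\notin\{1,\infty\}$ (in particular at $t=0$, where the first minor evaluates to $-t_1^2\ne 0$). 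A direct computation verifies that their four Pl\"{u}cker weights sum to zero, placing the origin in the convex hull of nonzero weights. The main obstacle is identifying this particular four-term cancellation; the rest of the proof reduces to routine weight bookkeeping and tracking the vanishing loci of $2\times 2$ subdeterminants.
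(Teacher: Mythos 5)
Your proof is correct, and while it rests on the same underlying criterion as the paper --- $T$-semistability via the state polytope, equivalently the Hilbert--Mumford criterion restricted to $T$ --- it replaces the paper's computer computation with a hand-verifiable structural argument, which is a genuine difference in route. The paper's proof simply reports the output of a state-polytope computation at $t\in\{\text{generic},0,1,\infty\}$ and links to code; you instead exploit the sparsity of $\MNod{t}$ to reduce everything to $2\times 2$ subdeterminants on rows 4--5, each equal (up to sign) to one of $t_1$, $t_0$, $t_0-t_1$, $t_1(t_0-t_1)$. Your instability argument via the $|I|$-parity count --- that at $t\in\{1,\infty\}$ every nonzero Pl\"ucker coordinate avoids $x_0$, forcing $\sum_k|I_k|=18$ and hence all state points onto the affine hyperplane $\sum a_i = \tfrac{1}{2}$ --- is a clean separating-hyperplane proof that identifies a single destabilizing cocharacter, $L_1+\cdots+L_5$, valid at both special values of $t$. (The paper instead reports the \emph{worst} 1-PS in each case, $(-2,1,1,1,1)$ and $(1,0,1,0,1)$, which your argument does not recover, but the proposition does not require it.) Your semistability argument for $t\notin\{1,\infty\}$ via the four explicit 7-column sets is also correct: I checked the four subdeterminants are $\pm t_1(t_0-t_1)$, $\pm(t_0-t_1)$, $\pm(t_0-t_1)$, $\pm t_1$, that they are simultaneously nonzero iff $t\notin\{1,\infty\}$ (including $t=0$), and that the corresponding Pl\"ucker weights sum to zero coordinatewise. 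This unifies the paper's two separate polytope computations (generic $t$ and $t=0$) into one argument. The trade-off is that the paper's computational approach also yields the proximum and worst-fitting 1-PS, which is useful downstream (e.g.\ for semistable replacement questions), whereas your argument only certifies (in)stability.
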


\begin{proof}GIT semistability with respect to a torus $T$ can be characterized using
state polytopes.

When $t \not\in \{0,1,\infty\}$, the state of $\PNod{t}$ has 21
points, and the state polytope has 20 vertices. The trivial
character $\chi_0$ is contained in the interior of the state polytope,
so, for general $t$, $[\PNod{t}]$ is $T$-semistable.

When $t=0$, the state of $\PNod{0}$ has 16 points, and they are all
vertices of the state polytope. The trivial
character $\chi_0$ is contained in the interior of the state polytope,
so $[\PNod{0}]$ is also $T$-semistable.

When $t=1$, the state of $\PNod{1}$ has 9 points, and the state
polytope has 8 vertices. The trivial
character $\chi_0$ is not contained in the state
polytope, so this point is $T$-unstable. We compute the proximum and
find that the worst 1-parameter subgroup is in
the direction $(-2,1,1,1,1)$.

When $t=\infty$, the state of $\PNod{\infty}$ has 12 points, and they
are all vertices of the state polytope. The trivial
character $\chi_0$ is not contained in the state
polytope, so this point is also $T$-unstable. We compute the proximum
and find that the worst 1-parameter subgroup is in
the direction $(1,0,1,0,1)$.

See \codelink{https://faculty.fordham.edu/dswinarski/MukaiModelOfM7/v1/5.13.htm}{5.13}.
\end{proof}

For any maximal torus $T \subset G$, $T$-instability implies $G$-instability. But, in general,
$T$-semistability for one maximal torus gives us little information
about $G$-semistability, in the following sense. In \cite{HP}, Hyeon and Park show that in
any GIT quotient problem of semisimple group representations, every
point is semistable with respect to a  general maximal torus. In
Section \ref{sec:constructing invariants}, with a great deal more
effort, we will study $G$-semistability for $[\PNod{t}]$ with $t \not\in \{1,\infty\}$.

\section{Constructing a $\Spin(10)$-invariant polynomial for $\mywedge^7
S^{+}$} \label{sec:constructing invariants}

 Let $S^{+}$ be the half-spin representation of $\Spin(10)$. Mukai's model of $\M_7$ is the quotient $\Gr(7,16) \git \Spin(10)$. By
definition, this GIT quotient is $\operatorname{Proj}( \oplus_d (\Sym^d \mywedge^7
S^{+})^{\Spin(10)})$. In this section, we construct a $\Spin(10)$ invariant polynomial.

We begin with an approach for computing $G$ invariants in a fixed
degree. Sturmfels calls this the \emph{Lie algebra method} in
\cite{Sturmfels}*{Section 4.5}, and it is
also discussed in Derksen and Kemper's book in \cite{DK}*{Section~4.5}. This approach uses the Casimir operator on $\mathfrak{g}$. 

\begin{definition}
Let $\delta_1,\ldots,\delta_m$ be a basis of $\mathfrak{g}$, and let $\gamma_1, \ldots, \gamma_m$ be the dual basis of $\mathfrak{g}$ with
respect to the Killing form $\kappa$. The Casimir operator $c$ is defined as 
\[ 
c = \sum \delta_i * \gamma_i
\]
\end{definition}

One key property of $c$ is the following:
  \begin{proposition} If $V(\lambda)$ is an irreducible representation with highest weight
  $\lambda$, then $c$ acts as multiplication by the scalar $(\lambda,
  \lambda+2\rho)$.  (Here (,) represents the Killing form, and $\rho$ is
  half the sum of the positive roots.) 
\end{proposition}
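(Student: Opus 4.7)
The plan is the classical two-step argument. First, I would show that the Casimir element $c$ lies in the center of the universal enveloping algebra $U(\mathfrak{g})$; by Schur's lemma, $c$ then acts on any irreducible representation $V(\lambda)$ as a scalar. Second, I would compute this scalar by evaluating $c$ on a highest weight vector $v_\lambda$, which is the unique place where the action is transparent.

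For centrality, the cleanest route is to show that $[x, c] = 0$ for every $x \in \mathfrak{g}$. Write $[x,\delta_i] = \sum_j a_{ij} \delta_j$ and $[x,\gamma_i] = \sum_j b_{ij}\gamma_j$. Since $\{\gamma_i\}$ is $\kappa$-dual to $\{\delta_i\}$ and $\kappa$ is $\mathrm{ad}$-invariant, one verifies that the matrices $(a_{ij})$ and $(b_{ij})$ are negative transposes of one another, and $[x,c] = \sum_{i,j}(a_{ij}\delta_j\gamma_i + b_{ij}\delta_i\gamma_j)$ telescopes to $0$.

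For the scalar, I would choose a basis of $\mathfrak{g}$ adapted to the root space decomposition: an orthonormal-type basis $\{h_i\}$ of the Cartan $\mathfrak{h}$ with $\kappa$-dual basis $\{h^i\}$, together with root vectors $e_\alpha, f_\alpha$ (for positive roots $\alpha$) normalized so that $\kappa(e_\alpha, f_\alpha) = 1$, so $f_\alpha$ is dual to $e_\alpha$ and vice versa. Then
\[
c \;=\; \sum_i h_i h^i \;+\; \sum_{\alpha>0}\bigl(e_\alpha f_\alpha + f_\alpha e_\alpha\bigr).
\]
Apply to $v_\lambda$. Since $e_\alpha v_\lambda = 0$ for $\alpha > 0$, the term $f_\alpha e_\alpha v_\lambda$ vanishes, while $e_\alpha f_\alpha v_\lambda = [e_\alpha,f_\alpha] v_\lambda = t_\alpha v_\lambda = (\lambda,\alpha)\,v_\lambda$, where $t_\alpha \in \mathfrak{h}$ is the element corresponding to $\alpha$ under $\kappa$. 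Summing, the root contribution is $\sum_{\alpha>0}(\lambda,\alpha)\,v_\lambda = (\lambda,2\rho)\,v_\lambda$. For the Cartan part, using the identity $t_\mu = \sum_i \mu(h^i) h_i$ (a direct consequence of the duality of $\{h_i\},\{h^i\}$), one gets $\sum_i h_i h^i v_\lambda = \sum_i \lambda(h_i)\lambda(h^i)\,v_\lambda = (\lambda,\lambda)\,v_\lambda$. Adding the two pieces yields $c\cdot v_\lambda = (\lambda, \lambda+2\rho)\,v_\lambda$, and by Schur's lemma this scalar is the action of $c$ on all of $V(\lambda)$.

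The main ``obstacle'' is really just notational: one must be careful about the identification $\mathfrak{h} \cong \mathfrak{h}^*$ via the Killing form so that the Cartan-part bookkeeping $\sum_i h_i h^i \mapsto (\lambda,\lambda)$ and the root-part identity $[e_\alpha,f_\alpha] = t_\alpha$ line up consistently. No other delicate issues arise; the argument is essentially linear algebra once the dual bases are set up correctly.
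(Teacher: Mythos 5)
Your argument is correct and is the standard proof. The paper itself does not reprove this; it simply cites Fulton--Harris (25.14), and the two-step argument you give---establish centrality of $c$ via $\mathrm{ad}$-invariance of the Killing form, invoke Schur's lemma, then evaluate on a highest-weight vector using a root-space-adapted dual pair of bases so that $c = \sum_i h_i h^i + \sum_{\alpha>0}(e_\alpha f_\alpha + f_\alpha e_\alpha)$---is precisely the argument found in that and other standard references. The bookkeeping in your Cartan and root contributions is accurate ($e_\alpha f_\alpha v_\lambda = [e_\alpha,f_\alpha]v_\lambda = (\lambda,\alpha)v_\lambda$, summing to $(\lambda,2\rho)$; $\sum_i h_i h^i v_\lambda = (\lambda,\lambda)v_\lambda$), so the scalar $(\lambda,\lambda+2\rho)$ is obtained correctly.
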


See for instance \cite{FH}*{(25.14)}. This suggests the following strategy for computing invariants.

\begin{proposition} \label{prop:invariants are ker c}
$v$ is invariant under $G$ if and only if $v \in \ker(c)$.
\end{proposition}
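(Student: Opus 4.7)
The plan is to reduce the statement to two standard facts: that for a connected semisimple group $G$ a vector is $G$-invariant if and only if it is annihilated by all of $\mathfrak{g}$, and that the Casimir scalar $(\lambda,\lambda+2\rho)$ vanishes only on the trivial representation. The hard direction is the ``$\Leftarrow$'', where the Casimir could a priori be zero on a non-trivial isotypic component.

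First I would dispatch the easy direction. If $v$ is $G$-invariant, then because $\Spin(10)$ is connected and the Lie algebra action arises by differentiating the group action, we have $\delta_i(v) = 0$ for every $\delta_i$ in a basis of $\mathfrak{g}$. Consequently $c(v) = \sum \delta_i \gamma_i(v) = 0$.

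For the converse, decompose the (finite-dimensional, completely reducible) representation as $V = \bigoplus_\lambda V(\lambda)^{\oplus m_\lambda}$, grouping the summands by isotypic component, and write $v = \sum_\lambda v_\lambda$ accordingly. By the cited proposition, $c$ preserves each isotypic component and acts on it by the scalar $(\lambda,\lambda+2\rho)$. Hence $c(v)=0$ forces $(\lambda,\lambda+2\rho)\, v_\lambda = 0$ for every $\lambda$. The key input is then that this scalar is strictly positive for every nonzero dominant weight $\lambda$: indeed $(\lambda,\lambda)>0$ because the Killing form is positive definite on the real span of the weights, and $(\lambda,2\rho)\geq 0$ since $\lambda$ is a nonnegative combination of simple roots and $\rho$ lies in the interior of the positive Weyl chamber. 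Therefore $v_\lambda = 0$ whenever $\lambda \neq 0$, which leaves $v = v_0$ in the trivial isotypic component.

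Finally, a vector in the trivial isotypic component of $V$ is $\mathfrak{g}$-fixed, hence $G$-fixed by connectedness of $\Spin(10)$. Combining both directions gives $\ker(c) = V^G$, as required. The main obstacle is really just invoking the positivity of $(\lambda,\lambda+2\rho)$ on nontrivial irreducibles; everything else is bookkeeping from complete reducibility and the Casimir's scalar action on irreducibles.
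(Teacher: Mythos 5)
The paper states this proposition without any proof at all: it is presented as an immediate consequence of the preceding proposition (the Casimir acts on each irreducible $V(\lambda)$ by the scalar $(\lambda,\lambda+2\rho)$, cited to Fulton--Harris) together with complete reducibility, and the reader is left to supply the reasoning. Your argument is exactly that standard reasoning, and it is correct: the forward direction uses connectedness of $\Spin(10)$ to pass from $G$-invariance to $\mathfrak{g}$-annihilation, and the converse decomposes $V$ into isotypic components, uses that $c$ is central in $U(\mathfrak{g})$ (hence preserves isotypic components and acts there by the Casimir scalar), and invokes positivity of $(\lambda,\lambda+2\rho)$ for nonzero dominant $\lambda$. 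Your justification of that positivity is fine --- dominant weights are indeed nonnegative combinations of simple roots because the inverse Cartan matrix has nonnegative entries --- though a slightly more self-contained route is to write $2\rho=\sum_{\alpha>0}\alpha$ and observe that $(\lambda,\alpha)\geq 0$ for every positive root $\alpha$ when $\lambda$ is dominant. In short, you have correctly filled in the proof the paper leaves implicit.
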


It also suggests an iterative procedure for computing invariants.
\begin{proposition} \label{prop:iterative strategy}
Let $V = \bigoplus_{\lambda \in S} V_{\lambda}^{m_{\lambda}}$ be the
irreducible decomposition of $V$. Let $S' = \{ (\lambda,\lambda+2\rho) :
\lambda \in S, \lambda \neq 0\}$. Then the operator $\prod_{k \in
  S'} (c- k)$ projects $V$ to $V^{G}$.
\end{proposition}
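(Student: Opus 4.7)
The plan is to use the Casimir eigenvalue formula from the preceding proposition together with an eigenspace decomposition argument. Decomposing $V = \bigoplus_{\lambda \in S} V_{\lambda}^{m_{\lambda}}$ into isotypic components, I note that $V^G$ is exactly the $\lambda = 0$ isotypic component $V_0^{m_0}$, since the irreducible trivial representations are precisely the one-dimensional $G$-fixed subspaces.

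For any $\lambda \in S$ with $\lambda \neq 0$, the preceding proposition says $c$ acts as the scalar $k_\lambda := (\lambda, \lambda + 2\rho)$ on $V_\lambda^{m_\lambda}$, and by definition $k_\lambda \in S'$. Thus one factor of $P := \prod_{k \in S'}(c - k)$ is $(c - k_\lambda)$, which annihilates $V_\lambda^{m_\lambda}$; hence $P$ kills every non-trivial isotypic component. On the trivial component $V_0^{m_0} = V^G$, the operator $c$ acts as $0$, so $P$ acts as the scalar $\prod_{k \in S'}(-k)$. Provided this scalar is nonzero, rescaling $P$ by its inverse yields the desired projector onto $V^G$.

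The one point to verify is that $k_\lambda \neq 0$ for every nonzero dominant $\lambda$, i.e.\ that $0 \notin S'$. This is a standard positivity argument: on the real span of the weights, the form $(\cdot,\cdot)$ induced from the Killing form is positive definite; $\rho$ lies in the closed dominant Weyl chamber, so $(\lambda,\rho) \geq 0$ for any dominant $\lambda$; and $(\lambda,\lambda) > 0$ for $\lambda \neq 0$. Therefore $(\lambda,\lambda+2\rho) > 0$ for every nonzero dominant $\lambda$, and in particular $\prod_{k \in S'}(-k) \neq 0$.

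I don't anticipate any serious obstacle: the entire argument is a two-line eigenspace calculation plus the positivity check above. The only place where care is required is confirming that the Casimir's eigenvalue $(\lambda,\lambda+2\rho)$ really is strictly positive for nonzero dominant $\lambda$, which is why the problem statement explicitly removes $\lambda = 0$ from the set $S'$.
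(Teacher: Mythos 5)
Your argument is correct and is essentially the same one the paper invokes: the paper's proof simply cites Derksen--Kemper, Prop.\ 4.5.18, together with the remark that the spectrum of the Casimir on $V$ is read off from the irreducible decomposition, while you spell out that eigenspace calculation directly (annihilation of each nonzero isotypic component, scalar action $\prod_{k\in S'}(-k)$ on $V^G$, and the positivity check $(\lambda,\lambda+2\rho)>0$ guaranteeing this scalar is nonzero). Your explicit remark that $\prod_{k\in S'}(c-k)$ is only a nonzero scalar multiple of the true idempotent projector, and hence needs to be normalized, is a small refinement worth keeping.
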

\begin{proof}
This is \cite{DK}*{Prop.~4.5.18}, plus the observation that we
can compute the spectrum of the Casimir operator $c$ on $V$ once we
know the irreducible decomposition of $V$.
\end{proof}

Proposition \ref{prop:invariants are ker c} gives a straightforward
algorithm for finding the invariant
polynomials in a fixed degree: compute the action of $c$, and then
compute its kernel. However, $\dim V$ is so large for  the representation we want to study that computing $\ker c$
in a naive way will not work. We have $\dim \mywedge^7 S^{+} = \binom{16}{7} = 11,440.$
A character calculation shows that the lowest degree invariants are in
degree 4; see
\codelink{https://faculty.fordham.edu/dswinarski/MukaiModelOfM7/v1/6.1.htm}{6.1}.
We have  
  \[
    \dim \Sym^4 \mywedge^7 S^{+} = \binom{11440+4-1}{4} = 714,036,824,189,260. 
  \]

A standard approach to reduce the dimensions of the spaces appearing in the
calculation is to restrict to the $T$- and $W$-invariant subspace,
where $T$ is a maximal torus and $W$ is
the Weyl group. However, this is still too large; we have $\dim
(\Sym^4 \mywedge^7 S^{+})^{T} = 359,317,176,120$, which implies that
the $T$-and $W$-invariant subspace will have dimension approximately 
100 million or more; see \codelink{https://faculty.fordham.edu/dswinarski/MukaiModelOfM7/v1/6.2.htm}{6.2}.

Here is an observation that leads to a successful
approach. $\mywedge^7 S^{+}$ is reducible; we have 
 $\mywedge^7 S^{+} \cong V_1 \oplus V_2$, where 
 $V_1$ has highest weight $(1,0,1,0,1)$ and $V_2$ has highest weight
 $(3,0,0,1,0)$.  We have $\dim V_1=8800$ and $\dim V_2=2640$, and
 highest weight vectors $v_1$ and $v_2$ generating these modules are
 as follows; see
 \codelink{https://faculty.fordham.edu/dswinarski/MukaiModelOfM7/v1/6.3.htm}{6.3}. 
 \begin{align*}
   v_1 &= y_{\{1,2\}, \{1, 3\}, \{1, 2, 3, 4\},\{1, 2, 3, 5\}, \{1, 2, 4, 5\}, \{1, 3, 4, 5\}, \{2, 3, 4, 5\}}\\
   v_2 &= y_{\{1, 2\}, \{1, 3\}, \{1, 4\}, \{1, 2, 3, 4\}, \{1, 2, 3, 5\}, \{1, 2, 4, 5\}, \{1, 3, 4, 5\}}
 \end{align*}
Thus
  \[
\Sym^d(V_1 \oplus V_2) \cong \sum_{d_1+d_2=d} \Sym^{d_1} V_1  \otimes
\Sym^{d_2} V_2 
\]

We focus on the summand $\Sym^{2} V_1 \otimes \Sym^{2} V_2$. There are
89 $\Spin(10)$ invariants in this summand, and they are all of the form
\[
   ( V(\lambda) \otimes V(\lambda^*) )^{\Spin(10)}
 \]
for some irreducible $V(\lambda) \subset V_1$ with dual
$V(\lambda^{*}) \subset V_2$; see \codelink{https://faculty.fordham.edu/dswinarski/MukaiModelOfM7/v1/6.4.htm}{6.4}.

Next, we analyze the irreducible decompositions of $\Sym^{2} V_1$ and
$\Sym^{2} V_2$ and select one dual pair of summands for further
study.  Specifically, we select the summand of $V_1$ with highest weight $(5,0,0,0,0)$.
$\dim V( 5\omega_1) = 1782$.  The rationale for this choice is that,
on the one hand, if $\lambda$ is too far from 0 in the weight lattice,
$V(\lambda)$ will have large dimension, and the subsequent
calculations in $V(\lambda) \otimes V(\lambda^*) $ will be difficult. But if
$\lambda$ is too close to 0 in the weight lattice, then the weight
$\lambda$ and $\lambda^{*}$ spaces in $\Sym^{2} V_1$ and
$\Sym^{2} V_2$  will have large dimension, and it will be
difficult to compute highest weight vectors generating $V(\lambda)$
and $V(\lambda^{*})$. Choosing $\lambda = (5,0,0,0,0)$ was a
compromise between these competing considerations; see \codelink{https://faculty.fordham.edu/dswinarski/MukaiModelOfM7/v1/6.5.htm}{6.5}.

Next, observe that $V(5\omega_1)$ appears in the fifth symmetric power
of the standard representation of $\so(10)$; see \codelink{https://faculty.fordham.edu/dswinarski/MukaiModelOfM7/v1/6.6.htm}{6.6}.
 \[
  \Sym^5 \operatorname{Std}   \cong V( 5\omega_1) \oplus V( 3\omega_1)
  \oplus V(\omega_1)
\]

Choose an explicit basis of $V( 5\omega_1) \subset \Sym^5 \operatorname{Std}  $ consisting of
  elements of the form 
  \[
f_I  = X_{-\alpha_{i_k}} \ldots X_{-\alpha_{i_1}}.w
  \]
where $w$ is a highest weight vector of $V( 5\omega_1)$ and $I =
\{i_1,\ldots,i_k\}$ indexes a sequence of negative roots; see
\codelink{https://faculty.fordham.edu/dswinarski/MukaiModelOfM7/v1/6.7.htm}{6.7}.
This yields a basis $B_{5\omega_1} = \{ f_I \otimes g_J \}$ of $V( 5\omega_1) \otimes
V( 5\omega_1)$.

 The $T$-invariants of $V( 5\omega_1) \otimes
V( 5\omega_1)$ are spanned by the basis elements $f_I \otimes g_J$ in
which  $f_I$ and $g_J$ have opposite weight. We have  
\[
\dim(V( 5\omega_1) \otimes V( 5\omega_1) )^{T} = 4722;
\]
see \codelink{https://faculty.fordham.edu/dswinarski/MukaiModelOfM7/v1/6.8.htm}{6.8}.
The dimension of this space is sufficiently small that we can 
compute the kernel of the restriction of the Casimir operator $c$ to
this space using the iterative approach suggested in Proposition
\ref{prop:iterative strategy}. We obtain a symbolic expression for an
invariant polynomial that we denote $F_{5\omega_1}$.

\begin{proposition}
We have explicit lists of sequences $I$ and $J$ defining a basis of
$(V(5\omega_1) \otimes V( 5\omega_1))^{T}$ and coefficients $c_{IJ}
\in \mathbb{Q}$ such
that the linear combination 
\begin{equation} \label{eqn:F5w1}
F_{5\omega_1}= \sum_{I,J} c_{I J} (X_{-\alpha_{i_k}} \ldots X_{-\alpha_{i_1}}.w_1) \otimes (X_{-\alpha_{j_\ell}} \ldots X_{-\alpha_{j_1}}.w_2)
\end{equation}
is a $\Spin(10)$ invariant polynomial.
\end{proposition}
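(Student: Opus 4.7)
The plan is to realize $F_{5\omega_1}$ as the image of a suitably chosen element of $(V(5\omega_1) \otimes V(5\omega_1))^{T}$ under the Casimir projection of Proposition \ref{prop:iterative strategy}, and then present the result as a concrete $\mathbb{Q}$-linear combination of the basis $B_{5\omega_1}$. Since $V(5\omega_1)$ is self-dual (its highest weight is fixed by the diagram involution, or equivalently $-w_0 \cdot 5\omega_1 = 5\omega_1$), the trivial representation appears in $V(5\omega_1) \otimes V(5\omega_1)$ with multiplicity exactly $1$, so there is a one-dimensional space of invariants to locate.

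First I would decompose $V(5\omega_1) \otimes V(5\omega_1)$ into irreducibles, either via a character calculation or by using LiE/\texttt{Sage}. For every non-trivial highest weight $\mu$ that appears, compute the Casimir eigenvalue $k_\mu = (\mu, \mu+2\rho)$ using the Killing form on the weight lattice of $\so(10)$, forming the finite set $S'$. Next, restrict the Casimir operator $c = \sum \delta_i * \gamma_i$ (written in a Chevalley basis so that the dual basis under $\kappa$ is explicit) to the 4722-dimensional $T$-invariant subspace, using the fact that $c$ preserves weight spaces and in particular the zero weight space. Then form the composition $\pi = \prod_{k \in S'}(c - k)$ acting on this finite-dimensional space; by Proposition \ref{prop:iterative strategy}, $\pi$ projects onto the trivial isotypic component, which here is one-dimensional.

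To obtain an explicit element, apply $\pi$ to any $T$-invariant $f_I \otimes g_J$ whose image is nonzero (trying a handful of basis vectors suffices). The image is a rational linear combination $\sum c_{IJ}\, f_I \otimes g_J$, and these are the coefficients $c_{IJ}$ asserted by the proposition. To turn this into a proof one then verifies invariance directly: it is enough to check that $X_{\alpha_i}.F_{5\omega_1} = 0$ for the five simple roots $\alpha_i$ of $\so(10)$, since $T$-invariance is automatic from the construction and these together generate $\Spin(10)$. The action of each $X_{\alpha_i}$ on $f_I \otimes g_J$ is computable from the Serre relations and the definition $f_I = X_{-\alpha_{i_k}} \cdots X_{-\alpha_{i_1}}.w_1$, so this check reduces to a finite linear-algebra identity over $\mathbb{Q}$.

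The main obstacle is sheer size: the $T$-invariant space has dimension $4722$, and iterating $c$ several times (once for every element of $S'$) requires handling a dense $4722 \times 4722$ rational matrix, or more cleverly, applying $c$ as a sparse operator to a single vector and taking $|S'|$ successive polynomial-in-$c$ evaluations. A minor but real difficulty is to identify the correct dual basis $\{\gamma_i\}$ to $\{\delta_i\}$ under $\kappa$ so that $c$ is normalized correctly; a miscalibration here changes eigenvalues by a global constant and still produces an invariant, but the coefficients $c_{IJ}$ would no longer match those referenced in the calculation link. Once $\pi$ produces a nonzero output, non-vanishness of the result witnesses that $F_{5\omega_1} \neq 0$, and the simple-root invariance check completes the proof.
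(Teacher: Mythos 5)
Your proposal matches the paper's construction: the paper also obtains $F_{5\omega_1}$ by restricting the Casimir operator to the $4722$-dimensional $T$-invariant subspace of $V(5\omega_1)\otimes V(5\omega_1)$ (expressed in the lowering-operator basis $B_{5\omega_1}$) and projecting onto the trivial isotypic summand via the iterative scheme of Proposition~\ref{prop:iterative strategy}; the paper's proof of the proposition is simply a pointer to the computer calculation carrying this out. One small correction in your final verification paragraph: the maximal torus $T$ together with the positive simple root subgroups $U_{\alpha_i}$ generate only a Borel subgroup of $\Spin(10)$, not the whole group, so the stated justification is off. The conclusion you want is still valid, but for a different reason: a weight-zero vector annihilated by every $X_{\alpha_i}$ is a highest weight vector of weight $0$ in a finite-dimensional representation, hence spans a copy of the trivial representation and is therefore $\mathfrak{g}$-invariant. (In fact that extra check is logically redundant, since Proposition~\ref{prop:invariants are ker c} already guarantees invariance of the Casimir projection; it is best regarded as a sanity check on the large computation rather than as a necessary step of the proof.)
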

See \codelink{https://faculty.fordham.edu/dswinarski/MukaiModelOfM7/v1/6.9.htm}{6.9}.

One more ingredient is needed in order to evaluate this symbolic
expression for $F_{5\omega_1}$ on points $[P] \in \Gr(7,16)$: namely, we need highest weight vectors
$w_1$ and $w_2$  generating $V( 5\omega_1) \subset \Sym^{2} V_1$ and
$V( 5\omega_1) \subset \Sym^{2} V_2$, respectively. We obtain these as
follows. The Casimir
operator acts on $V(5 \omega_1)$ with eigenvalue $65$, and acts by
different scalars on the
other irreducible submodules of $\Sym^{2} V_1$ and $\Sym^{2} V_2$
containing the $5 \omega_1 $ weight space.  Thus, we can compute $w_1$ and $w_2$ by
iteratively projecting away the eigenspaces of $(\Sym^{2}
V_1)_{(5,0,0,0,0)}$ and $(\Sym^{2} V_2)_{(5,0,0,0,0)}$ corresponding
to the other eigenvalues of the Casimir operator $c$. This yields vectors $w_1$ and $w_2$
having 569 terms and 785 terms, respectively; see \codelink{https://faculty.fordham.edu/dswinarski/MukaiModelOfM7/v1/6.10.htm}{6.10}.

\textit{Remark.} We can consider $V(5 \omega_1) \subset \Sym^5
\operatorname{Std} $ and simplify the  expression (\ref{eqn:F5w1}) for
$F_{5\omega_1}$. This yields an $\SO(10)$-invariant polynomial of
bidegree $(5,5)$ in two sets of 10 variables. It has 7502
terms; see \codelink{https://faculty.fordham.edu/dswinarski/MukaiModelOfM7/v1/6.11.htm}{6.11}.
It seems likely that this polynomial has been described in the literature before,
but I do not know a reference for this.

\subsection{$\Spin(10)$-semistability of singular
  curves} \label{subsec:Spin10 semistability}

We now state and prove the main theorem.

\begin{theorem} \label{theorem:main}
The points $[P] \in \Gr(7,16)$ parametrizing the following singular curves are $\Spin(10)$-semistable.
\begin{enumerate}
\item The 7-cuspidal curve with heptagonal symmetry $\CCusp$
\item The genus 7 balanced ribbon $\CRib$
\item The reducible nodal curves $\CNod{t}$ for $t \neq 1, \infty$
\end{enumerate}
\end{theorem}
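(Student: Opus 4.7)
The plan is to deduce $\Spin(10)$-semistability by evaluating the invariant polynomial $F_{5\omega_1}$ constructed in this section on each of the three families. Recall that by definition, $[P] \in \Gr(7,16)$ is $\Spin(10)$-semistable if and only if some homogeneous $\Spin(10)$-invariant of positive degree is nonzero at $[P]$. Since $F_{5\omega_1} \in (\Sym^4 \mywedge^7 S^{+})^{\Spin(10)}$ is such an invariant, it suffices to verify $F_{5\omega_1}([P]) \neq 0$ for $[P]$ equal to $[\PCusp]$, $[\PRib]$, and $[\PNod{t}]$ (for appropriate $t$).

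First I would convert each of the matrices $\MCusp$, $\MRib$, $\MNod{t}$ given in Propositions \ref{prop:PCusp}, \ref{prop:PRib}, and \ref{prop:PNodt} into its Pl\"ucker coordinates by computing all $\binom{16}{7}$ maximal minors. This yields a point of $\mywedge^7 S^{+}$ (with coordinates in $\mathbb{Q}$ for $\PCusp$ and $\PRib$, and in $\mathbb{Q}[t_0,t_1]$ for $\PNod{t}$). The sparsity of these matrices (only one or two nonzero entries per row in most cases) means that only a handful of minors are nonzero, which should keep the evaluation tractable.

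Next I would substitute these Pl\"ucker coordinates into the symbolic expression (\ref{eqn:F5w1}) for $F_{5\omega_1}$ using the highest weight vectors $w_1, w_2$ of Section \ref{sec:constructing invariants} (with 569 and 785 terms respectively) together with the stored coefficients $c_{IJ}$ and lowering-operator sequences indexed by the $T$-invariant basis pairs $(I,J)$. For $\CCusp$ and $\CRib$ this produces a single rational number, which I would check is nonzero. For $\CNod{t}$ the result is a homogeneous polynomial in $(t_0,t_1)$; I would check that it is not identically zero and determine its zero locus, confirming it does not vanish at generic $t$. If the values at $t=0$ and other specific $t \not\in \{1,\infty\}$ are nonzero, then the corresponding $[\PNod{t}]$ are semistable.

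The main obstacle will be the sheer size of the symbolic computation: $F_{5\omega_1}$ is built from $4722$ basis vectors in $(V(5\omega_1)\otimes V(5\omega_1))^T$, each obtained by applying chains of lowering operators to $w_1$ and $w_2$, so writing out $F_{5\omega_1}$ as an element of $\Sym^4 \mywedge^7 S^{+}$ and then evaluating it is not feasible symbolically in one go. The feasible route is to evaluate each tensor $f_I \otimes g_J$ numerically on the Pl\"ucker point, accumulate the weighted sum, and rely on the $T$-weight condition to eliminate most pairs automatically. A secondary subtlety in the $\CNod{t}$ case is that, although $F_{5\omega_1}$ is manifestly $\Spin(10)$-invariant by construction, one must check that its zero locus on the family $\{[\PNod{t}]\}$ misses the generic point; failure of this would require evaluating a second invariant, but assuming the value is a nonzero polynomial in $(t_0,t_1)$ this single invariant suffices to conclude semistability for all $t$ outside a finite set (which we then verify is contained in $\{1,\infty\}$, consistent with Proposition \ref{prop:GIT instability}).
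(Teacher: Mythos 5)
Your proposal is correct and takes essentially the same approach as the paper: evaluate the invariant $F_{5\omega_1}$ on the Pl\"ucker coordinates of $\MCusp$, $\MRib$, and $\MNod{t}$, observe the values are nonzero (with $F_{5\omega_1}(\PNod{t}) = t_1^2(t_0-t_1)^3\cdot\frac{234000}{343}$ vanishing exactly at $t\in\{1,\infty\}$), and invoke the standard characterization of GIT semistability via nonvanishing of an invariant.
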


\begin{proof}
We use the linear spaces $\PCusp$, $\PRib$, and $\PNod{t}$ described in
Propositions \ref{prop:PCusp}, \ref{prop:PRib}, and \ref{prop:PNodt}.
  
We have
\begin{align*}
F_{5 \omega_1} (\PCusp) &= -63984375\\
F_{5 \omega_1} (\PRib) &= \frac{92664000}{343}\\
F_{5 \omega_1} (\PNod{t}) &= t_1^2  (t_0 - t_1)^3  \frac{234000}{343}\\
\end{align*}
See \codelink{https://faculty.fordham.edu/dswinarski/MukaiModelOfM7/v1/6.12.htm}{6.12}.

Since there exists a $\Spin(10)$ invariant polynomial that does not
vanish at these points, these points are $\Spin(10)$-semistable.
\end{proof}

Recall that by Proposition \ref{prop:GIT instability}, we know
that $\PNod{1}$ and $\PNod{\infty}$ are $T$-unstable, hence
$\Spin(10)$-unstable. Thus, we have a complete description of
$\Spin(10)$-semistability or instability for each member of the family
$\CNod{t}$. These results naturally suggest the following question.

\begin{question} What are the GIT semistable replacements for the family $\PNod{t}$ when $t=1$
and $t=\infty$?
\end{question}

Foundational references for the statement of GIT
semistable replacement include
\cite{Mumford}*{Lemma 5.3}, \cite{Seshadri}*{Theorem 4.1.i}, and
\cite{Shah}*{Proposition 2.1}. More recent
references include \cite{Caporaso}*{Section 1.2.1}, \cite{CM}*{Theorem
  11.1}, and \cite{Laza}*{Proposition 1.7}. Unfortunately, none of these
references give an effective algorithm for computing the GIT semistable replacement.

\textit{Remark.} The calculations reported in the proof of Theorem
\ref{theorem:main} required very large amounts of time and memory. They
were accomplished by parallel calculations on four AWS
\texttt{r5.24xlarge} instances, each with 96 vCPUs and 768 GB
memory. This took approximately 36 hours. In future work, we will try to improve the \texttt{Macaulay2} code for
these calculations to permit additional calculations at a lower cost.

\section*{References}
\bibliographystyle{amsplain}
\begin{biblist}

 \bib{AFS}{article}{
   author={Alper, Jarod},
   author={Fedorchuk, Maksym},
   author={Smyth, David Ishii},
   title={Finite Hilbert stability of (bi)canonical curves},
   journal={Invent. Math.},
   volume={191},
   date={2013},
   number={3},
   pages={671--718},
   issn={0020-9910},
   review={\MR{3020172}},
   doi={10.1007/s00222-012-0403-6},
}

\bib{AFPRW}{article}{
   author={Aprodu, Marian},
   author={Farkas, Gavril},
   author={Papadima, \c{S}tefan},
   author={Raicu, Claudiu},
   author={Weyman, Jerzy},
   title={Koszul modules and Green's conjecture},
   journal={Invent. Math.},
   volume={218},
   date={2019},
   number={3},
   pages={657--720},
   issn={0020-9910},
   review={\MR{4022070}},
   doi={10.1007/s00222-019-00894-1},
}

\bib{BE}{article}{
   author={Bayer, Dave},
   author={Eisenbud, David},
   title={Graph curves},
   note={With an appendix by Sung Won Park},
   journal={Adv. Math.},
   volume={86},
   date={1991},
   number={1},
   pages={1--40},
   issn={0001-8708},
   review={\MR{1097026}},
   doi={10.1016/0001-8708(91)90034-5},
 }

\bib{Caporaso}{article}{
   author={Caporaso, Lucia},
   title={A compactification of the universal Picard variety over the moduli
   space of stable curves},
   journal={J. Amer. Math. Soc.},
   volume={7},
   date={1994},
   number={3},
   pages={589--660},
   issn={0894-0347},
   review={\MR{1254134}},
   doi={10.2307/2152786},
}

 \bib{CM}{article}{
   author={Casalaina-Martin, Sebastian},
   title={A tour of stable reduction with applications},
   conference={
      title={A celebration of algebraic geometry},
   },
   book={
      series={Clay Math. Proc.},
      volume={18},
      publisher={Amer. Math. Soc., Providence, RI},
   },
   date={2013},
   pages={65--117},
   review={\MR{3114937}},
 }

\bib{Chevalley}{book}{
   author={Chevalley, Claude C.},
   title={The algebraic theory of spinors},
   publisher={Columbia University Press, New York},
   date={1954},
   pages={viii+131},
   review={\MR{0060497}},
 }

\bib{DFS2014}{article}{
   author={Deopurkar, Anand},
   author={Fedorchuk, Maksym},
   author={Swinarski, David},
   title={Gr\"{o}bner techniques and ribbons},
   journal={Albanian J. Math.},
   volume={8},
   date={2014},
   number={1},
   pages={55--70},
   review={\MR{3293901}},
}

\bib{DFS2016}{article}{
   author={Deopurkar, Anand},
   author={Fedorchuk, Maksym},
   author={Swinarski, David},
   title={Toward GIT stability of syzygies of canonical curves},
   journal={Algebr. Geom.},
   volume={3},
   date={2016},
   number={1},
   pages={1--22},
   issn={2313-1691},
   review={\MR{3455418}},
   doi={10.14231/AG-2016-001},
 }

\bib{DK}{book}{
   author={Derksen, Harm},
   author={Kemper, Gregor},
   title={Computational invariant theory},
   series={Encyclopaedia of Mathematical Sciences},
   volume={130},
   edition={Second enlarged edition},
   note={With two appendices by Vladimir L. Popov, and an addendum by
   Norbert A'Campo and Popov;
   Invariant Theory and Algebraic Transformation Groups, VIII},
   publisher={Springer, Heidelberg},
   date={2015},
   pages={xxii+366},
   isbn={978-3-662-48420-3},
   isbn={978-3-662-48422-7},
   review={\MR{3445218}},
   doi={10.1007/978-3-662-48422-7},
 }

\bib{EL}{article}{
   author={Ein, Lawrence},
   author={Lazarsfeld, Robert},
   title={Tangent developable surfaces and the equations defining algebraic
   curves},
   journal={Bull. Amer. Math. Soc. (N.S.)},
   volume={57},
   date={2020},
   number={1},
   pages={23--38},
   issn={0273-0979},
   review={\MR{4037406}},
   doi={10.1090/bull/1683},
}

\bib{E}{article}{
   author={Eisenbud, David},
   title={Green's conjecture: an orientation for algebraists},
   conference={
      title={Free resolutions in commutative algebra and algebraic geometry},
      address={Sundance, UT},
      date={1990},
   },
   book={
      series={Res. Notes Math.},
      volume={2},
      publisher={Jones and Bartlett, Boston, MA},
   },
   date={1992},
   pages={51--78},
 }

 \bib{ES}{article}{
   author={Schreyer, Frank-Olaf},
   author={Eisenbud, David},
   title={Betti numbers of syzygies and cohomology of coherent sheaves},
   conference={
      title={Proceedings of the International Congress of Mathematicians.
      Volume II},
   },
   book={
      publisher={Hindustan Book Agency, New Delhi},
   },
   date={2010},
   pages={586--602},
 }

\bib{FV}{article}{
   author={Farkas, Gavril},
   author={Verra, Alessandro},
   title={The geometry of the moduli space of odd spin curves},
   journal={Ann. of Math. (2)},
   volume={180},
   date={2014},
   number={3},
   pages={927--970},
   issn={0003-486X},
   review={\MR{3245010}},
   doi={10.4007/annals.2014.180.3.3},
}

\bib{FH}{book}{
   author={Fulton, William},
   author={Harris, Joe},
   title={Representation theory},
   series={Graduate Texts in Mathematics},
   volume={129},
   note={A first course;
   Readings in Mathematics},
   publisher={Springer-Verlag, New York},
   date={1991},
   pages={xvi+551},
   isbn={0-387-97527-6},
   isbn={0-387-97495-4},
   review={\MR{1153249}},
   doi={10.1007/978-1-4612-0979-9},
 }

\bib{HP}{article}{
   author={Hyeon, Donghoon},
   author={Park, Junyoung},
   title={Generic states and stability},
   note={\url{https://arxiv.org/abs/1703.02697}},
}

 \bib{Laza}{article}{
   author={Laza, Radu},
   title={Perspectives on the construction and compactification of moduli
   spaces},
   conference={
      title={Compactifying moduli spaces},
   },
   book={
      series={Adv. Courses Math. CRM Barcelona},
      publisher={Birkh\"{a}user/Springer, Basel},
   },
   date={2016},
   pages={1--39},
   review={\MR{3495110}},
 }

\bib{M2}{manual}{
  author={Grayson, Daniel R.},
  author={Stillman, Michael E..},
  title={Macaulay2, a software system for research in algebraic geometry},
  note={Available at \url{https://math.uiuc.edu/Macaulay2/}},
}

\bib{Magma}{article}{
   author={Bosma, Wieb},
   author={Cannon, John},
   author={Playoust, Catherine},
   title={The Magma algebra system. I. The user language},
   note={Computational algebra and number theory (London, 1993)},
   journal={J. Symbolic Comput.},
   volume={24},
   date={1997},
   number={3-4},
   pages={235--265},
   issn={0747-7171},
   review={\MR{1484478}},
   doi={10.1006/jsco.1996.0125},
}

\bib{MS}{article}{
   author={Morrison, Ian},
   author={Swinarski, David},
   title={Gr\"{o}bner techniques for low-degree Hilbert stability},
   journal={Exp. Math.},
   volume={20},
   date={2011},
   number={1},
   pages={34--56},
   issn={1058-6458},
   review={\MR{2802723}},
   doi={10.1080/10586458.2011.544577},
 }

\bib{Mukai}{article}{
   author={Mukai, Shigeru},
   title={Curves and symmetric spaces. I},
   journal={Amer. J. Math.},
   volume={117},
   date={1995},
   number={6},
   pages={1627--1644},
   issn={0002-9327},
   review={\MR{1363081}},
   doi={10.2307/2375032},
}

\bib{Mumford}{article}{
   author={Mumford, David},
   title={Stability of projective varieties},
   journal={Enseign. Math. (2)},
   volume={23},
   date={1977},
   number={1-2},
   pages={39--110},
   issn={0013-8584},
   review={\MR{450272}},
 }

\bib{RS}{article}{
   author={Raicu, Claudiu},
   author={Sam, Steven V.},
   title={Bi-graded Koszul modules, K3 carpets, and Green's conjecture},
   journal={Compos. Math.},
   volume={158},
   date={2022},
   number={1},
   pages={33--56},
   issn={0010-437X},
   review={\MR{4371041}},
   doi={10.1112/s0010437x21007703},
}

\bib{Sage}{manual}{
      author={Developers, The~Sage},
       title={{S}agemath, the {S}age {M}athematics {S}oftware {S}ystem
  ({V}ersion 9.3)},
        note={{\tt https://www.sagemath.org}},
}

\bib{Schreyer}{article}{
   author={Schreyer, Frank-Olaf},
   title={Syzygies of canonical curves and special linear series},
   journal={Math. Ann.},
   volume={275},
   date={1986},
   number={1},
   pages={105--137},
   issn={0025-5831},
   review={\MR{849058}},
   doi={10.1007/BF01458587},
 }

 \bib{Seshadri}{article}{
   author={Seshadri, C. S.},
   title={Quotient spaces modulo reductive algebraic groups},
   journal={Ann. of Math. (2)},
   volume={95},
   date={1972},
   pages={511--556; errata, ibid. (2) 96 (1972), 599},
   issn={0003-486X},
   review={\MR{309940}},
   doi={10.2307/1970870},
}

 \bib{Shah}{article}{
   author={Shah, Jayant},
   title={A complete moduli space for $K3$ surfaces of degree $2$},
   journal={Ann. of Math. (2)},
   volume={112},
   date={1980},
   number={3},
   pages={485--510},
   issn={0003-486X},
   review={\MR{595204}},
   doi={10.2307/1971089},
 }

\bib{Sturmfels}{book}{
   author={Sturmfels, Bernd},
   title={Algorithms in invariant theory},
   series={Texts and Monographs in Symbolic Computation},
   edition={2},
   publisher={SpringerWienNewYork, Vienna},
   date={2008},
   pages={vi+197},
   isbn={978-3-211-77416-8},
   review={\MR{2667486}},
}

\bib{Code}{article}{
   author={Swinarski, David},
   title={Computer calculations used to study Mukai's model of $\M_7$},
   note={\url{https://faculty.fordham.edu/dswinarski/MukaiModelOfM7/v1/}},
 }

\end{biblist}
\end{document}